% !TEX encoding = IsoLatin9

\documentclass[12pt,twoside,letter]{amsart}
\usepackage{mathrsfs}

\usepackage{txfonts}
\usepackage{bbm}

\usepackage{amsfonts}
\usepackage{amsmath}
\usepackage{amssymb}
\usepackage{wasysym}
\usepackage{psfrag}
\usepackage{graphics,epsfig,amsmath}  %standard LaTex graphics tool
\setlength{\unitlength}{1cm}
\usepackage{comment}
\usepackage{xcolor}
\usepackage{enumerate}

\newtheorem{theorem}{Theorem}[section]
\newtheorem{definition}[theorem]{Definition}
\newtheorem{remark}[theorem]{Remark}
\newtheorem{lemma}[theorem]{Lemma}
\newtheorem{prop}[theorem]{Proposition}
\newtheorem{corollary}[theorem]{Corollary}
\newtheorem{problem}[theorem]{Problem}
\newtheorem{example}[theorem]{Example}

\title[Assigned distance-like functions]{Set-assigned distance-like functions and the induced quotient metric space}

\author[X. Cui]{Xiaojun Cui}
\address{Department of Mathematics, Nanjing University, Nanjing 210093, China}
\email{xcui@nju.edu.cn}

\author[L. Jin]{Liang Jin}
\address{Department of Mathematics, Nanjing University of Science and Technology,
Nanjing 210094, China}
\email{jl@njust.edu.cn}

\author[X. Su] {Xifeng Su}
\address{School of Mathematical Sciences, Laboratory of Mathematics and Complex Systems (Ministry of Education)\\
Beijing Normal University\\
No. 19, XinJieKouWai St.,HaiDian District\\
 Beijing 100875, P. R. China}
\email{xfsu@bnu.edu.cn, billy3492@gmail.com}
\thanks{X. Cui is supported by the National Natural Science Foundation of China (Grant No. 12171234, 11631006, 11790272), the Project Funded by the Priority Academic Program Development of Jiangsu Higher Education Institutions (PAPD) and the Fundamental Research Funds for the Central Universities. L. Jin is supported by the National Natural Science Foundation of China (Grant No. 11901293) and Start-up Foundation of Nanjing University of Science and Technology (No. AE89991/114). X. Su is supported by the National Natural Science Foundation of China (Grant No. 11971060, 11871242).}
%\smallskip\newline 
%The datasets generated during and/or analysed during the current study are available from the corresponding author on reasonable request.}

\begin{document}
\maketitle
\vspace{0.1in}

%\tableofcontents

\begin{abstract}
 On a complete, connected, locally compact, non-compact geodesic space $(X,d)$, we assign each compact set a distance-like function. With the help of these functions, we obtain a pseudo-metric on the space of (non-empty) compact subsets of $X$ which is less than the Hausdorff distance. The quotient metric space is closely related to the large scale geometry of the ambient metric space. 
 
 In particular, we study both extreme cases --the pseudo-metric either vanishes or equals the original distance. The compactness of the level sets as well as stability under the Gromov-Hausdorff topology of such dl-functions are also investigated. As an application, we also give a representation formula of  any distance-like function in terms of the singleton-assigned distance-like functions defined here.
\end{abstract}

\section{Introduction}\label{sec:introduction}
Let $(X,d)$ be a complete connected, locally compact, non-compact geodesic space, i.e., a length space on which the distance between pair of points is attained by the length of some Lipschitz curve connecting them. Such a curve fits into the name \textbf{geodesic}, although the latter term admits a more wide meaning:
\begin{definition}[\cite{BBI}, Definition 2.5.27]\label{geo}
A continuous map $\gamma:I\rightarrow X$ is called a geodesic if for every $t\in I$ there exists a closed interval $J$ containing a neighborhood of $t$ in $I$ such that the length of $\gamma|_{J}$ attains the distance between its endpoints. In other words, a geodesic is a curve which is \textbf{locally} a distance minimizer.
\end{definition}
It is well known that the metric $d(\cdot,\cdot):X\times X\rightarrow\mathbb{R}, x\in X$ and the naturally defined geodesics are a source of many investigations of rich geometrical and topological information of $X$ \cite{G2}. In this paper, we focus on the global analogues of distance called distance-like functions (dl-functions for short), which also play an indispensible role in exploring the global picture of the metric space $(X,d)$. One could formulated the definition of dl-functions as
\begin{definition}\label{d-l}
$u:X\rightarrow\mathbb{R}$ is called a dl-function if there is a sequence of closed subsets $\{H_{n}\}_{n\in\mathbb{N}}$ of $X$ diverging to infinity (i.e., for some $x_{0}\in X,\,\,d(x_{0},H_{n})\rightarrow\infty$) and a sequence $\{c_{n}\}_{n\in\mathbb{N}}\subseteq\mathbb{R}$ such that
\begin{equation}\label{d-l def}
u(x):=\lim_{n\rightarrow\infty}[d(x,H_{n})-c_{n}],\quad\quad\text{for any}\,\,x\in X
\end{equation}
exists in the compact-open topology.
\end{definition}

\begin{remark}\label{1-Lip}
As a direct consequence of Definition~\ref{d-l}, any dl-function is a limit of 1-Lipschitz functions, thus is also 1-Lipschitz.
\end{remark}

\begin{remark}\label{eikonal}
If the geodesic space $(X,d)$ is a Riemannian manifold $(M,g)$, then it is proved in \cite{C} that the notion of dl-functions coincides with that of viscosity solutions of the eikonal equation
\begin{equation} \label{eik}
|\nabla u(x)|_{g}=1,\hspace{0.5cm}x\in M.
\end{equation}
More generally, in the context of the Hamilton-Jacobi equations, the theory of viscosity solutions is an intersection point of many interesting branches of applied mathematics. For instance, it has been established from the viewpoint of optimal control \cite{C-S}, Hamiltonian dynamical systems \cite{F} and PDEs \cite{L}.
\end{remark}

The structure of the space of dl-functions on $X$ gives us a global picture of $X$, especially some insights on the geometry near infinity of $X$. For instance, a subclass of such functions --the Busemann functions associated to a given geodesic ray, describes the limit behavior of the distance function (up to constants) to some point moving to infinity along a given direction and their level sets, usually called horospheres, could be regarded as the geodesic spheres prescribed by that geodesic ray and centered at some point belonging to infinity.

\medskip
In this paper, we will define another subclass of dl-functions, referred as the \textbf{set-assigned dl-functions}. It exhibits the similar large scale distance information as the Busemann function does, but in another aspect. Precisely, it describes the limit behavior of distance to the ``infinite sphere''. We believe that this class of dl-functions would provide us an opportunity to look at the infinity of the manifold from a quite different viewpoint. Also, it is hopeful that the constructions of set-assigned dl-functions will play roles in the study of various problems arising in metric structures, especially for certain special length spaces. We will come back to this issue in the future work such as applications in the transportation theory.

\medskip
Now let us go into the detail of the formal definition of set-assigned dl-functions. Recall that the distance function from a non-empty, closed set $H\subseteq X$ is defined as
\[
d(x,H)=\inf_{h\in H}d(x,h), \qquad \forall~x \in X.
\]
For every $x\in X, r>0$, we denote by $B_{r}(x)=\{x^{\prime}\in X\,|\,d(x,x^{\prime})<r\}$ the open geodesic ball centered at $x$ and $S\!_r(x)$ its boundary sphere.

\vspace{5pt}
For each compact subset $K_{0}\subset X$ and $r>0$, $B_{r}(K_{0})=\cup_{x\in K_0}B_{r}(x)$ is the $r$-ball around $K_0$ and $S\!_r(K_{0})=\partial B_{r}(K_{0})$, its topological boundary. From the definition, it is easily seen that
\begin{equation}\label{r-sphere}
S\!_r(K_0)=\{x\in X\,\,|\,\,d(x,K_0)=r\}.
\end{equation}
We use the notation
\begin{equation}\label{u-r}
u^{r}_{K_{0}}(x):=d(x, S\!_r(K_{0}))-r.
\end{equation}
By the assumptions on the space $(X,d)$ and some other elementary facts for distance functions, we could show that
\begin{theorem}\label{sa sol}
The limit
\begin{equation}\label{set-assigned def}
u_{K_0}(x):=\lim\limits_{r\rightarrow +\infty} u^{r}_{K_{0}}(x)
\end{equation}
exists for every $x\in X$, and thus defines a function $u_{K_0}:X\rightarrow\mathbb{R}$.
\end{theorem}

\begin{remark}\label{vis}
Let \,$X=(M,g)$ be a connected, complete, non-compact smooth Riemannian manifold. Then $u_{K_{0}}:M\rightarrow\mathbb{R}$ satisfies the eikonal equation \eqref{eik} in the viscosity sense, and is locally semi-concave (with linear modulus) on $M$.

In fact, $u_{K_0}$ is a dl-function on $M$. By Remark \ref{eikonal} or \cite[Theorem 1]{C}, $u_{K_{0}}:M\rightarrow\mathbb{R}$ is a viscosity solution of \eqref{eik} and its local semi-concavity (with linear modulus) is a direct consequence of the strict convexity of $g$ on each tangent space.
\end{remark}

Let $\mathcal{K}(X)$ denote the space of all non-empty compact subsets of $(X,d)$. Theorem ~\ref{sa sol} leads us to
\begin{definition}\label{def sa}
For any $K_{0}\in\mathcal{K}(X)$, the function $u_{K_{0}}$ defined above is called $K_{0}$-assigned dl-functions. $K_{0}$ is called the base-set of $u_{K_{0}}$ and the family of functions $\{u_{K_{0}}|K_{0}\in\mathcal{K}(X)\}$ are called set-assigned dl-functions.
\end{definition}

Set-assigned dl-functions are characterized by a fundamental property: being minimal within the class of all dl-functions nonnegative on the referred set.
\begin{theorem}\label{sa-Min}
For any $K_{0}\in\mathcal{K}(X)$ and any dl-function $u:X\rightarrow\mathbb{R}$ such that $u|_{K_0}\geq0$, we have
\[
u_{K_{0}}\leq u\quad\text{on}\quad X.
\]
\end{theorem}

For $K_0, K_1\in\mathcal{K}(X)$, if we set
\[
u_{K_0}(K_1)=\min_{x\in K_1}u_{K_0}(x).
\]
An important application of Theorem \ref{sa-Min} is the following construction of a pseudo metric on $\mathcal{K}(X)$.
\begin{theorem}\label{pm}
The function $\rho:\mathcal{K}(X)\times\mathcal{K}(X)\rightarrow[0,+\infty)$ defined as
\begin{equation}\label{pseudo metric}
\rho(K_0,K_1):=-\frac{1}{2}[u_{K_0}(K_1)+u_{K_1}(K_0)]
\end{equation}
is a pseudo metric on $\mathcal{K}(X)$. Denote by $\sim$ the equivalence relation on $\mathcal{K}(X)$ induced by $\rho$, i.e., $K_0\sim K_1$ if $\rho(K_0,K_1)=0$.

\vspace{1em}
Moreover, $K_0\sim K_1$ if and only if there exists $c\in\mathbb{R}$ such that
\[
u_{K_0}=u_{K_1}+c\quad\text{ on } X.
\]
\end{theorem}

\begin{definition}\label{def sa}
$x_{0}\in K_{0}$ is called an exposed point of $K_0$ if $u_{K_{0}}(x_{0})=0$, and we denote by ep$(K_0)$ the set of all exposed points of $K_0$. 
\end{definition}

If we interpret ep as the projection from infinite sphere to the base set, the following theorem seems reasonable. 
\begin{theorem}\label{ep}
For any $K_{0}\in\mathcal{K}(X)$, we have
\begin{enumerate}[$(i)$]
  \item  ep$(K_0)$ is a nonempty subset of $\partial K_0$, where $\partial K_0$ denotes the topological boundary of $K_0$, and ep\,$\circ$\,ep$(K_0)\,=\,$ep$(K_0)$.
  \item  $u_{K_0}=u_{\text{ep}(K_0)}$. In other words, $K_0\sim$ ep$(K_0)$.
\end{enumerate}
\end{theorem}
Thus, we denote by $ep:\mathcal{K}(X)\rightarrow\mathcal{K}(X)$ the operator taking exposed points of non-empty compact subsets of $X$.

We shall use the notation $d_{sa}$ to denote the quotient metric on $\mathcal{K}(X)/\!\!\sim$ induced  by $\rho$, and call it the \textbf{set-assigned metric}. It will be shown later that the metric space $(\mathcal{K}(X)/\!\!\sim,d_{sa})$  is closely related to the large scale geometry of $(X,d)$.

\medskip
We call a  complete connected, locally compact, non-compact geodesic space associated with a distinguished compact subset  the \textbf{set-assigned geodesic space}.
\begin{definition}\label{GH metric}
The Gromov-Hausdorff metric between two set-assigned geodesic spaces $(X_0, d_0, K_0)$ and  $(X_1, d_1, K_1)$ is defined as
$$d_{GH}((X_0, d_0, K_0),(X_1, d_1, K_1))=\inf_{d}\,\,\{d_H(X_0,X_1)+d_H(K_0,K_1)\},$$
where $\inf$ is taken over all admissible metric $d$ on the disjoint union $X_0\sqcup X_1$, i.e., the metric $d: (X_0\sqcup X_1)\times(X_0\sqcup X_1)\rightarrow\mathbb{R}_{+}$ satisfies $d|_{X_0\times X_0}=d_0,d|_{X_1\times X_1}=d_1$, and $d_{H}$ always denotes the Hausdorff metric associated to the admissible metric $d$.
\end{definition}

\begin{remark}
Since the spaces $X_i, i=0,1$ are non-compact, the Gromov-Hausdorff metric $d_{GH}$ defined above may be infinite for many cases.
\end{remark}

%\begin{remark}
%$d_{GH}$ make the space of all such geodesic spaces into a metric space. The corresponding topology is finer than the topology defined by \cite[Definition 8.1.1]{BBI}.
%\end{remark}

The Gromov-Hausdorff metric describes the similarity of metric structure between two set-assigned geodesic spaces and it is closely related to the existence of the following maps.

\begin{definition}\label{e-iso}
Let $(X_0, d_0, K_0),(X_1, d_1, K_1)$ be two set-assigned geodesic spaces. The map $f:(X_0,K_0)\rightarrow(X_1,K_1)$ is called a set-assigned $\epsilon$-isometry if
\begin{enumerate}
  \item $\text{ dis }f:=\sup_{(x,x')\in X_0\times X_0}\{\,|d_0(x,x')-d_1(f(x),f(x'))|\,\}<\epsilon$,
  \item $f(X_0)$ is an $\epsilon$-net in $X_1$ and $f(K_0)$ is an $\epsilon$-net in $K_1$.
\end{enumerate}
\end{definition}

\begin{remark}
We do not impose any continuity assumption on $f$.
\end{remark}

We shall show that the set-assigned dl-functions are continuous objects with respect to topology of the set-assigned geodesic spaces induced by the Gromov-Hausdorff metric defined in Definition~\ref{GH metric}.
\begin{theorem}\label{sa-GH}
Let $(X_0, d_0, K_0)$ and $(X_1, d_1, K_1)$ be two set-assigned geodesic spaces with
$$
d_{GH}((X_0, d_0, K_0),(X_1, d_1, K_1))<\epsilon,
$$
then there exists a set-assigned $2\epsilon$-isometry $f:(X_0,K_0)\rightarrow(X_1,K_1)$ with
\begin{equation}
|\,u_{K_0}-u_{K_1}\circ f\,|\,\,(x_0)<10\epsilon\quad\text{for any}\quad x_0\in X_0.
\end{equation}
\end{theorem}

%At last, we list some interesting questions, at least to the authors, that consist of topics of our future study.
%\begin{problem}
%\begin{itemize}
  %\item $u_{x_{0}}$ is bounded from above.
  %\item $u_{x_{0}}$ admits a maximal point.
  %\item the level set of $u_{x_0}$ is compact.
%\end{itemize}
%\end{problem}

%\begin{problem}
%If $x_i \rightarrow \infty$ and $u_{x_i}+c_i \rightarrow u$ in the compact-open topology, where $c_i$ is a sequence of constants, then clearly $u$ is %itself a viscosity solution.  What can we say about the set of such kind of viscosity solutions?
%\end{problem}

The remaining of this paper is organized as follows. In Section~\ref{sec:Gradient line}, we recall some useful notions and facts related to dl-functions as preliminaries. Section 3 consists of the main part of this paper, where we prove Theorems~\ref{sa sol},\ref{sa-Min},\ref{pm},\ref{ep} listed above. As a consequence, the pseudo metric space $(\mathcal{K}(X),\rho)$ and its quotient metric space are defined. Section 4 focuses on the proof of Theorem \ref{sa-GH}. As the final part, Section 5 offers results and examples under the additional assumption that the base-set is a singleton.

\section{Dl-functions and their gradient lines}\label{sec:Gradient line}
In this section, we recall elementary facts about dl-functions on which later discussions are based. Particular attentions are paid to a description of gradient lines of dl-functions. We also give a representation formula for any dl-functions in terms of its subclass--Busemann functions.

\subsection{Negative gradient rays of dl-functions}
It is well known that on a complete, non-compact Riemannian manifold, co-rays of a given ray $\gamma$ are \textbf{negative gradient rays} of the associated Busemann function $b_{\gamma}$ and are constructed by standard methods. As an application of these ideas, we define the notion of negative gradient rays of any dl-functions on the more general metric spaces.

In this section, we consider that $X$ is a complete, locally compact, non-compact geodesic spaces. For $x\in X$ and a closed set $H\subseteq X$, thanks to the locally compactness of $X$, there is $h\in H$ such that $d(x,h)=d(x,H)$. Such an $h$ is called a foot of $x$ on $H$. Since $X$ is a geodesic space, there is a unit speed minimal geodesic segment $\gamma:[0,d(x,h)]\rightarrow X$ connecting $x$ and $h$, which means that for any $[t_{1},t_{2}]\subseteq[0,d(x,h)]$, the length
\begin{equation}\label{min}
L(\gamma|_{[t_{1},t_{2}]})=d(\gamma(t_{1}),\gamma(t_{2}))=t_{2}-t_1.
\end{equation}
For the notion of \emph{speed of a curve} on a metric space, see \cite[Page 55, Definition 2.7.1]{BBI}.

\medskip
Now we assume that $u:X\rightarrow\mathbb{R}$ is a dl-function and $\{H_{n}\}_{n\in\mathbb{N}}$ is the sequence of closed subsets of $X$ arising in the Definition \ref{d-l} of $u$, then for any $x\in X$, choose $x_{n}\rightarrow x$ on $X$ as well as unit speed minimal geodesic segments $\gamma_{n}:[0,r_{n}]\rightarrow X$ with $r_{n}=d(x_{n},H_{n}),\,\,\gamma_{n}(0)=x_{n}\,$ and $\,\gamma_{n}(r_{n})$ a foot of $x_{n}$ on $H_{n}$. By Ascoli-Arzel\`a Theorem, up to a subsequence, $\gamma_{n}$ converges uniformly on compact intervals to some $\gamma:[0,+\infty)\rightarrow X$, which we call a negative gradient ray of $u$, or simply a \textbf{$u$-ray}. It is clear that $\gamma$ is a distance-minimizer between any pair of its points, thus is a unit speed geodesic ray. We first verify that the name ``negative gradient ray of $u$" constructed in the above manner matches the reality.

\begin{prop}\label{gradient}
Let $u:X\rightarrow\mathbb{R}$ be a dl-function and $\gamma:[0,+\infty)\rightarrow X$  be one of its negative gradient ray constructed above, then for $\,0\leq t_{1}\leq t_{2},$
\begin{equation}\label{grad}
u(\gamma(t_{2}))-u(\gamma(t_{1}))=t_{1}-t_{2}.
\end{equation}
\end{prop}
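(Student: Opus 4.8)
The plan is to reduce everything to one elementary fact: along a unit-speed minimal geodesic running from a point to one of its feet on a closed set, the distance to that set decreases at unit speed; the proposition then follows by passing to the limit along the sequence $\{H_n\}$, $\{c_n\}$ defining $u$. A first preliminary observation is that the approximating geodesics $\gamma_n$ run arbitrarily far: since $\{H_n\}$ diverges to infinity there is $x_0\in X$ with $d(x_0,H_n)\to\infty$, and because $x_n\to x$ the numbers $d(x_0,x_n)$ are bounded, so $r_n=d(x_n,H_n)\ge d(x_0,H_n)-d(x_0,x_n)\to\infty$. Hence, working along the subsequence for which $\gamma_n\to\gamma$ uniformly on compact intervals, for any fixed $t_2$ we have $r_n>t_2$ for all large $n$, so $\gamma_n(t_1)$ and $\gamma_n(t_2)$ are defined.

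The key step is the claim that $d(\gamma_n(t),H_n)=r_n-t$ for every $t\in[0,r_n]$. The bound $d(\gamma_n(t),H_n)\le r_n-t$ is immediate from $\gamma_n(r_n)\in H_n$ together with $d(\gamma_n(t),\gamma_n(r_n))=r_n-t$, which is \eqref{min}. For the opposite inequality I would argue by contradiction: if $d(\gamma_n(t),H_n)<r_n-t$, choose (using local compactness) a foot $h'$ of $\gamma_n(t)$ on $H_n$; then
\[
r_n=d(x_n,H_n)\le d(x_n,\gamma_n(t))+d(\gamma_n(t),h')=t+d(\gamma_n(t),H_n)<t+(r_n-t)=r_n,
\]
which is absurd. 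Writing $u_n:=d(\,\cdot\,,H_n)-c_n$, this gives, for all large $n$,
\[
u_n(\gamma_n(t_2))-u_n(\gamma_n(t_1))=(r_n-t_2-c_n)-(r_n-t_1-c_n)=t_1-t_2 .
\]

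It then remains to pass to the limit in this identity. Each $u_n$ is $1$-Lipschitz, and $\gamma_n\to\gamma$ uniformly on $[0,t_2]$, so $|u_n(\gamma_n(t_i))-u_n(\gamma(t_i))|\le d(\gamma_n(t_i),\gamma(t_i))\to 0$ for $i=1,2$; moreover $u_n(\gamma(t_i))\to u(\gamma(t_i))$ by the definition of a dl-function (Definition~\ref{d-l}), the convergence being uniform on compacta as recorded in Remark~\ref{1-Lip}. Hence $u_n(\gamma_n(t_i))\to u(\gamma(t_i))$, and letting $n\to\infty$ above yields \eqref{grad}. No serious obstacle is anticipated here; the only points needing a little care are the triangle-inequality argument for $d(\gamma_n(t),H_n)=r_n-t$ and keeping all indices along the subsequence along which $\gamma_n\to\gamma$ (so that the limit curve $\gamma$ and the values $\gamma_n(t_i)$ are simultaneously available).
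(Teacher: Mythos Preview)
Your proof is correct and follows essentially the same approach as the paper's: both compute $d(\gamma_n(t),H_n)=r_n-t$ along the approximating segments and then pass to the limit using the $1$-Lipschitz property of $d(\cdot,H_n)$ together with $\gamma_n\to\gamma$. The paper compresses these steps into a chain of equalities (and simply assumes $t_2\le r_n$ without justifying $r_n\to\infty$), whereas you spell out each ingredient; your contradiction argument for $d(\gamma_n(t),H_n)\ge r_n-t$ could be replaced by the direct triangle inequality $d(\gamma_n(t),H_n)\ge d(x_n,H_n)-d(x_n,\gamma_n(t))=r_n-t$, but this is a matter of taste, not substance.
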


\begin{proof}
Let $\gamma_{n}:[0,r_{n}]\rightarrow X$ be the sequence of minimal geodesic segments used in the construction of $\gamma$. Without loss of generality, we assume that for any $n\in\mathbb{N}, t_{2}\leq r_{n}$. By Definition \ref{d-l}, we have
\begin{align*}
&u(\gamma(t_{2}))-u(\gamma(t_{1}))\\
=\,&\lim_{n\rightarrow\infty}[d(\gamma_{n}(t_{2}),H_{n})-d(\gamma_{n}(t_{1}),H_{n})]\\
=\,&\lim_{n\rightarrow\infty}[d(\gamma_{n}(t_{2}),\gamma_{n}(r_{n}))-d(\gamma_{n}(t_{1}),\gamma_{n}(r_{n}))]\\
=\,&\lim_{n\rightarrow\infty}[(r_{n}-t_{2})-(r_{n}-t_{1})]=t_{1}-t_{2},
\end{align*}
where the second equality uses the fact that $\gamma_{n}(r_{n})$ is a foot of $\gamma_{n}(0)$ on $H_{n}$ and the third one uses \eqref{min}. This completes the proof.
\end{proof}

\begin{remark}\label{ray}
We observe that any unit speed curve $\gamma:[0,+\infty)\rightarrow X$ satisfying \eqref{grad} must be a geodesic ray: for any closed interval $[t_{1},t_{2}]\subseteq[0,+\infty)$,
$$
d(\gamma(t_{1}),\gamma(t_{2}))\geq u(\gamma(t_{1}))-u(\gamma(t_{2}))=t_{2}-t_{1}=L(\gamma|_{[t_{1},t_{2}]}),
$$
where the first inequality holds since $u$ is 1-Lipschitz (due to Remark~\ref{1-Lip}). Thus the above inequalities are equalities.
\end{remark}

In addition, we also note that
\begin{prop}\label{pro co-ray}
For a dl-function $u:X\rightarrow\mathbb{R}$,
\begin{enumerate}
  \item there is a $u$-ray starting from any $x\in X$,
  \item the limit of a sequence of $u$-rays is also a $u$-ray.
\end{enumerate}
\end{prop}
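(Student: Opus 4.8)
The plan is to treat the two statements separately; (1) is an immediate instance of the co-ray construction recalled above, while (2) is a diagonal extraction in which the only real subtlety lies. For (1), fix $x\in X$ and carry out that construction with the \emph{constant} sequence $x_n\equiv x$. First, $r_n:=d(x,H_n)\to\infty$: with $x_0$ as in Definition~\ref{d-l}, $r_n\ge d(x_0,H_n)-d(x_0,x)\to\infty$. As noted above, a foot $h_n\in H_n$ of $x$ exists (the relevant closed balls of $X$ being compact), so we may take a unit-speed minimal geodesic $\gamma_n:[0,r_n]\to X$ from $x$ to $h_n$. These curves are $1$-Lipschitz and, on each fixed interval $[0,T]$ (for $n$ with $r_n\ge T$), take values in the compact ball $\overline{B}_T(x)$; Ascoli-Arzel\`a together with a diagonalization over $T=1,2,\dots$ then yields a subsequence of $\{\gamma_n\}$ converging uniformly on compact intervals to some $\gamma:[0,\infty)\to X$. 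By definition $\gamma$ is a co-ray to $u$, and $\gamma(0)=\lim_n\gamma_n(0)=x$.

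For (2), suppose the co-rays $\gamma^{(k)}$ to $u$ converge, uniformly on compact intervals, to a curve $\gamma:[0,\infty)\to X$. By the definition of a co-ray, for each $k$ there are unit-speed minimal geodesics $\gamma^{(k)}_n:[0,r^{(k)}_n]\to X$ with $r^{(k)}_n=d(x^{(k)}_n,H_n)$, $x^{(k)}_n:=\gamma^{(k)}_n(0)\to\gamma^{(k)}(0)$, and $\gamma^{(k)}_n(r^{(k)}_n)$ a foot on $H_n$, such that $\gamma^{(k)}_n\to\gamma^{(k)}$ uniformly on compact intervals along some subsequence of $n$. I would choose indices $n_1<n_2<\cdots$ recursively so that, for every $k$, $r^{(k)}_{n_k}\ge k$ and $\sup_{t\in[0,k]}d\bigl(\gamma^{(k)}_{n_k}(t),\gamma^{(k)}(t)\bigr)<1/k$; this is possible since, for each fixed $k$, $r^{(k)}_n\to\infty$ and $\gamma^{(k)}_n\to\gamma^{(k)}$ as $n\to\infty$. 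Put $\sigma_k:=\gamma^{(k)}_{n_k}$. Then
\[
d(\sigma_k(t),\gamma(t))\le d\bigl(\gamma^{(k)}_{n_k}(t),\gamma^{(k)}(t)\bigr)+d\bigl(\gamma^{(k)}(t),\gamma(t)\bigr)
\]
shows $\sigma_k\to\gamma$ uniformly on compact intervals (the first term is $<1/k$ on $[0,k]$, the second tends to $0$ on any fixed interval). Each $\sigma_k$ is a unit-speed minimal geodesic from $\sigma_k(0)=x^{(k)}_{n_k}$, which converges to $\gamma(0)$, to a foot on $H_{n_k}$, with length $r^{(k)}_{n_k}=d(x^{(k)}_{n_k},H_{n_k})\to\infty$; so the sequence $\{\sigma_k\}$ exhibits $\gamma$ as a co-ray to $u$.

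The step I expect a careful reader to scrutinize — and the main (mild) obstacle — is that $\sigma_k$ is built from $H_{n_k}$ rather than $H_k$. This causes no trouble: the subsequence $\{H_{n_k}\}_k$ still diverges to infinity and still satisfies $d(x,H_{n_k})-c_{n_k}\to u(x)$ for every $x\in X$, hence is itself an admissible defining sequence for $u$ in the sense of Definition~\ref{d-l}, and a co-ray produced from it is, by definition, a co-ray to $u$. (Equivalently, one may relabel the defining sequence of $u$ at the outset so that the indices match.) This completes the proof.
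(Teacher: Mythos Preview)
Your proof is correct and follows essentially the same approach as the paper's: part (1) by taking the constant sequence $x_n\equiv x$ in the co-ray construction, and part (2) by a diagonal extraction $\gamma^{(k)}_{n_k}$ satisfying $r^{(k)}_{n_k}\ge k$ and $d(\gamma^{(k)}_{n_k}(t),\gamma^{(k)}(t))<1/k$ on $[0,k]$, then using the triangle inequality. Your explicit remark that the subsequence $\{H_{n_k}\}$ is itself an admissible defining sequence for $u$ is a point the paper leaves implicit, so your treatment is, if anything, more careful.
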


\begin{proof}
(1) can be easily seen from the construction of $u$-ray above, since one can choose $x_{n}=x$ for $n\geq1$.

To show (2), let $\gamma_{n}:[0,+\infty)\rightarrow X$ be a sequence of $u$-rays that converges to a ray $\gamma:[0,+\infty)\rightarrow X$. From the construction of $u$-rays, there is a sequence of minimal geodesic segments $\gamma_{n,m}:[0,r_{n,m}]\rightarrow X$ converging to $\gamma_{n}$ as $m\rightarrow\infty$ with $r_{n,m}=d(\gamma_{n,m}(0),H_{m})\rightarrow+\infty$ and $\gamma_{n,m}(r_{n,m})$ is a foot of $\gamma_{n,m}(0)$ on $H_{m}$. Denote by $m(n)\geq1$ the least integer such that $r_{n,m(n)}\geq n$ and
$$
d(\gamma_{n,m(n)}(t),\gamma_{n}(t))\leq\frac{1}{n},\quad\text{ for }t\in[0,n].
$$
So we have
\begin{equation*}
d(\gamma_{n,m(n)}(t),\gamma(t))\leq\frac{1}{n}+d(\gamma_{n}(t),\gamma(t)),\quad\text{ for }\,\,t\in[0,n].
\end{equation*}
This implies that the minimal geodesic segments $\gamma_{n,m(n)}$ converge uniformly to $\gamma$ on compact intervals.
\end{proof}

\medskip
Furthermore, if we assume the following non-branching property:
\begin{enumerate}[(NB)]
  \item for $a<t<b$, any two unit speed minimal geodesic segments $\xi,\eta:[a,b]\rightarrow X$ with $\xi(a)=\eta(a)$ and $\xi(t)=\eta(t)$ must coincide,
\end{enumerate}
then the following converse version of Proposition \ref{gradient} holds:
\begin{prop}\label{co-ray}
If $\gamma:[0,+\infty)\rightarrow X$ is a unit speed curve satisfying \eqref{grad}, then it is a $u$-ray.
\end{prop}

\begin{remark}
Comparing \cite[Page 36, 8.1]{B}, if a complete, locally compact, non-compact geodesic space $X$ satisfies (NB), then it is a G-space defined by H.Busemann, see \cite[Page 37]{B}. The assumption (NB) holds for any connected, complete, non-compact smooth Riemannian manifold.
\end{remark}

The main ingredient of Proposition~\ref{co-ray} is contained in the following
\begin{lemma}\label{uni co-ray}
Let $\gamma:[0,+\infty)\rightarrow X$ be a unit speed geodesic ray satisfying \eqref{grad}, then for any $t_{0}>0$, the $u$-ray starting from $\gamma(t_{0})$ is unique and thus has to be the sub-ray $\gamma|_{[t_{0},+\infty)}$.
\end{lemma}

\begin{proof}
Let $\xi:[0,+\infty)\rightarrow X$ be a unit speed geodesic ray satisfying \eqref{grad} with $\xi(0)=\gamma(t_{0})$. Consider the concatenate curve $\tilde{\gamma}:[0,+\infty)\rightarrow X$ defined as for $t\in[0,t_{0}],\tilde{\gamma}(t)=\gamma(t)$ and for $t\in[t_{0},+\infty),\tilde{\gamma}(t)=\xi(t-t_{0})$.

Now we show that $\tilde{\gamma}$ satisfying \eqref{grad}: without loss of generality,
we assume $0\leq t_{1}\leq t_{0}\leq t_{2}$, and we have
\begin{align*}
&u(\tilde{\gamma}(t_{2}))-u(\tilde{\gamma}(t_{1}))=[u(\tilde{\gamma}(t_{2}))-u(\tilde{\gamma}(t_{0}))]+[u(\tilde{\gamma}(t_{0}))-u(\tilde{\gamma}(t_{1}))]\\
=\,&[u(\xi(t_{2}))-u(\xi(t_{0}))]+[u(\gamma(t_{0}))-u(\gamma(t_{1}))]\\
=\,&(t_{0}-t_{2})+(t_{1}-t_{0})=t_{1}-t_{2}.
\end{align*}
By Remark \ref{ray}, $\tilde{\gamma}$ is a geodesic ray. Then assumption (NB) implies that $\tilde{\gamma}|_{[t_{0},+\infty)}\equiv\gamma|_{[t_{0},+\infty)}$. Thus $\xi(\cdot) =\gamma(t_{0}+\cdot)$ and it is the only unit speed geodesic ray defined on $[0,+\infty)$ satisfying \eqref{grad} and $\xi(0)=\gamma(t_{0})$. By the first item of Proposition~\ref{pro co-ray}, $\xi$ is also the unique $u$-ray starting from $\gamma(t_{0})$.
\end{proof}

\begin{proof}[Proof of Proposition \ref{co-ray}]
For any geodesic ray $\gamma:[0,+\infty)\rightarrow X$ starting from $x$ and satisfying \eqref{grad}, by Lemma \ref{uni co-ray} above, $\gamma|_{[\frac{1}{n},+\infty)}$ is a $u$-ray starting from $\gamma(\frac{1}{n})$. To complete the proof, we use the second item of Proposition~\ref{pro co-ray} to conclude that $\gamma$ must be a $u$-ray starting from $x_0$.
\end{proof}

\subsection{An equivalent definition of dl-functions}
We begin with another definition of dl-functions given by Gromov in \cite{G1}.
\begin{definition}\label{origin-def}
If a continuous function $u:X\rightarrow\mathbb{R}$ satisfies that for each $t\in\mathbb{R},\,x\in X$ with $u(x)\geq t$, we have
\begin{equation}\label{origin}
u(x)=t+d(x,u^{-1}(-\infty,t)),
\end{equation}
then $u$ is called a dl-function.
\end{definition}

\medskip
It is not difficult to show that this definition is equivalent to the one in Definition~\ref{d-l} at the beginning of the introduction.
\begin{prop}\label{equi}
$u:X\rightarrow\mathbb{R}$ is a dl-function in the sense of Definition~\ref{d-l} if and only if it satisfies Definition~\ref{origin-def}.
\end{prop}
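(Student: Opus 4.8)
The plan is to prove the two implications separately, using the co-ray machinery of Proposition \ref{gradient} and Proposition \ref{pro co-ray} for one direction and a direct construction of diverging sublevel sets for the other.

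For the direction ``Definition \ref{d-l} $\Rightarrow$ Definition \ref{origin-def}'', let $u$ be given by \eqref{d-l def} with data $\{H_n\}, \{c_n\}$. Continuity is already known (Remark \ref{1-Lip}: $u$ is $1$-Lipschitz). Fix $t\in\real$ and $x$ with $u(x)\geq t$; write $s:=u(x)-t\geq 0$. Since $u$ is $1$-Lipschitz, for any $y$ with $u(y)\leq t$ we have $d(x,y)\geq u(x)-u(y)\geq s$, hence $d(x,u^{-1}(-\infty,t])\geq s$; a limiting argument gives the same bound with the open sublevel set $u^{-1}(-\infty,t)$. For the reverse inequality I would use a co-ray $\gamma:[0,\infty)\to X$ to $u$ starting at $x$ (Proposition \ref{pro co-ray}(1)). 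By \eqref{grad}, $u(\gamma(\tau))=u(x)-\tau$ for all $\tau\geq 0$, so $u(\gamma(s))=t$ and, for $\tau$ slightly larger than $s$, $u(\gamma(\tau))<t$; since $\gamma$ is a unit-speed geodesic this yields a point of $u^{-1}(-\infty,t)$ at distance $\leq s+\eps$ from $x$ for every $\eps>0$ (or exactly $s$ if one is content to land on the closed sublevel set and then perturb). Combining the two bounds gives \eqref{origin}. A small technical point to handle here is the distinction between the closed and open sublevel sets in \eqref{origin}; this is resolved because moving an arbitrarily small extra distance along $\gamma$ strictly decreases $u$.

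For the converse ``Definition \ref{origin-def} $\Rightarrow$ Definition \ref{d-l}'', suppose $u$ is continuous and satisfies \eqref{origin}. Fix a base point $x_0\in X$ and, for $n\in\natural$, set $t_n:=u(x_0)-n$ and $H_n:=u^{-1}(-\infty,t_n]$ (using the closed sublevel set; one can also take $u^{-1}(-\infty,t_n)$ and argue the two give the same distance function via \eqref{origin}). I claim $u(x)=d(x,H_n)+t_n$ for every $x$ with $u(x)\geq t_n$: this is exactly \eqref{origin} with $t=t_n$. For $x$ with $u(x)<t_n$ we have $x\in H_n$ so $d(x,H_n)=0$, and we would only need the formula to hold in the limit; since $t_n\to-\infty$, for each fixed $x$ eventually $u(x)\geq t_n$ and the exact identity $u(x)=d(x,H_n)-(-t_n)$ holds, so taking $c_n:=-t_n=n-u(x_0)$ gives \eqref{d-l def} on the nose (no limit even needed pointwise, though it is a limit in form). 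It remains to check the $H_n$ are closed (clear, by continuity of $u$) and \emph{diverge to infinity}, i.e. $d(x_0,H_n)\to\infty$. But applying \eqref{origin} at $x=x_0$, $t=t_n$ (valid since $u(x_0)=t_n+n\geq t_n$) gives $d(x_0,H_n)=u(x_0)-t_n=n\to\infty$. One should also dispose of the degenerate possibility that some $H_n$ is empty: if $H_n=\emptyset$ then \eqref{origin} cannot hold for any $x$ with $u(x)\geq t_n$ unless there are no such $x$, but $x_0$ is one; alternatively, if $u$ is bounded below the construction should instead shift so that the sublevel sets remain nonempty, or one observes that \eqref{origin} forces $u$ to be unbounded below whenever it is nonconstant — I would include a sentence addressing this.

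The main obstacle I anticipate is the first direction, specifically producing a point of the \emph{open} sublevel set $u^{-1}(-\infty,t)$ at distance exactly (or arbitrarily close to) $u(x)-t$ from $x$: the clean way is via the co-ray, whose defining property \eqref{grad} makes $u$ decrease at unit rate along a unit-speed geodesic, so it simultaneously certifies the distance bound and lands in the sublevel set. Everything in the second direction is essentially bookkeeping once one notices that \eqref{origin} applied at the base point is precisely what forces the sublevel sets to diverge to infinity.
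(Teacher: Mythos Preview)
Your proposal is correct and follows essentially the same approach as the paper: for Definition~\ref{d-l} $\Rightarrow$ Definition~\ref{origin-def} you use a co-ray from $x$ together with the $1$-Lipschitz bound, and for the converse you take the sublevel sets $H_n=u^{-1}(-\infty,t_n]$ with $c_n=-t_n$, exactly as the paper does (the paper simply fixes $t_n=-n$). Your treatment is in fact a bit more careful than the paper's on two points the paper glosses over --- the open versus closed sublevel set in \eqref{origin}, and the nonemptiness/divergence of the $H_n$ --- but these are refinements of the same argument, not a different route.
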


\begin{proof}
If we take $H_{n}=u^{-1}(-\infty,-n]$ and $c_{n}=n$, it is clear that $u$ satisfying \eqref{origin} is a dl-function in the sense of Definition \ref{d-l}. Now let $u$ be a dl-function in the sense of Definition \ref{d-l}. By (i) of Proposition \ref{pro co-ray}, if $u(x)>t$, then there is a co-ray $\gamma:[0,+\infty)\rightarrow X$ starting from $x$, then by Proposition \ref{gradient}, we have
\begin{equation*}
u(\gamma(u(x)-t))=t,\quad d(x,\gamma(u(x)-t))=d(x,u^{-1}(-\infty,t)),
\end{equation*}
and $u(x)=u(\gamma(u(x)-t))+d(x,\gamma(u(x)-t))=t+d(x,u^{-1}(-\infty,t))$, where the second equality holds since $u$ is 1-Lipschitz.
\end{proof}

\medskip
Gromov's definition helps us to obtain the stability of dl-functions easily, as is shown by the following
\begin{prop}
Let $u_{n}:X\rightarrow\mathbb{R}$ be a sequence of dl-functions converging on compact subsets to a continuous function $u:X\rightarrow\mathbb{R}$, then $u$ is a dl-function.
\end{prop}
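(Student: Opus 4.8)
\textit{Proof proposal.}
The plan is to check that the limit $u$ satisfies Gromov's characterisation (Definition~\ref{origin-def}); by Proposition~\ref{equi} this is equivalent to being a dl-function. So fix $t\in\mathbb{R}$ and $x\in X$ with $u(x)\geq t$, and aim to prove
\[
u(x)=t+d\bigl(x,u^{-1}(-\infty,t)\bigr).
\]

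The inequality ``$\leq$'' is cheap: each $u_n$ is $1$-Lipschitz by Remark~\ref{1-Lip}, hence so is the pointwise limit $u$; therefore for every $y$ with $u(y)<t$ we get $u(x)-t\leq u(x)-u(y)\leq d(x,y)$, and taking the infimum over such $y$ yields $u(x)\leq t+d(x,u^{-1}(-\infty,t))$ (trivially true if the sublevel set is empty). For the reverse inequality I would use co-rays. By Proposition~\ref{pro co-ray}(1), for each $n$ there is a co-ray $\gamma_n\colon[0,\infty)\to X$ to $u_n$ with $\gamma_n(0)=x$; these are unit-speed geodesic rays, hence equi-Lipschitz curves through a common point, so (since $X$ is boundedly compact) Ascoli--Arzel\`a gives a subsequence converging uniformly on compact intervals to a unit-speed ray $\gamma\colon[0,\infty)\to X$. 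Proposition~\ref{gradient} gives $u_n(\gamma_n(s))=u_n(x)-s$ for all $s\geq0$. For fixed $s$ the points $\gamma_n(s)$ lie in one compact ball on which $u_n\to u$ uniformly, and $u$ is continuous, so passing to the limit along the subsequence yields
\[
u(\gamma(s))=u(x)-s,\qquad s\geq0.
\]
Now for any $s>u(x)-t\,(\geq0)$ we have $u(\gamma(s))=u(x)-s<t$, i.e.\ $\gamma(s)\in u^{-1}(-\infty,t)$, while $d(x,\gamma(s))=s$ because $\gamma$ is a unit-speed ray; letting $s\downarrow u(x)-t$ gives $d(x,u^{-1}(-\infty,t))\leq u(x)-t$. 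Combining the two inequalities gives the identity, and Proposition~\ref{equi} then concludes that $u$ is a dl-function.

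I expect the only delicate point to be the exchange of limits $u_n(\gamma_n(s))\to u(\gamma(s))$: this genuinely needs the locally compact / boundedly compact hypothesis, so that the trajectories $\{\gamma_n(s)\}_n$ are trapped in a single compact set where the convergence $u_n\to u$ is uniform, combined with continuity of $u$. A minor secondary point is the boundary case $u(x)=t$, handled by approaching the sublevel set through parameters $s>u(x)-t$ rather than $s=u(x)-t$ itself.
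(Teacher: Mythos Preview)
Your proof is correct. Both you and the paper verify Gromov's characterisation (Definition~\ref{origin-def}) and obtain the inequality ``$\leq$'' in the same way, via the $1$-Lipschitz property. The difference is in the opposite inequality: the paper stays with \emph{points} rather than curves. For $u(x)>t$ it invokes Gromov's identity for each $u_n$ to produce a foot $h_n\in u_n^{-1}(t)$ with $d(x,h_n)=u_n(x)-t$; these distances are bounded (since $u_n(x)\to u(x)$), so local compactness yields a subsequential limit $h^\ast$ with $u(h^\ast)=t$ and $d(x,h^\ast)=u(x)-t$, giving $u(x)\geq t+d(x,u^{-1}(-\infty,t))$ directly.

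Your route instead extracts a limiting co-ray $\gamma$ via Ascoli--Arzel\`a and reads off $u(\gamma(s))=u(x)-s$. This is a bit heavier machinery (compactness of curve families rather than of a bounded set of points), but it buys you something: the argument simultaneously shows that the limit $u$ itself admits a gradient ray from every point, without appealing to the co-ray construction for $u$ after the fact. It also handles the borderline case $u(x)=t$ cleanly, which the paper's write-up leaves implicit. Either approach is fine; the paper's is the more economical one-line extraction, while yours recovers more structure along the way.
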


\begin{proof}
We use Gromov's definition of dl-functions. If $u(x)>t$, then for $n$ sufficiently large, $u_{n}(x)>t$ and
\begin{align*}
u_{n}(x)=t+d(x,u_{n}^{-1}(-\infty,t)).
\end{align*}
Since $u_{n}(x)$ converges, there are $h_{n}\in u_{n}^{-1}(t)$ such that $d(x,h_{n})=d(x,u_{n}^{-1}(-\infty,t))$ are uniformly bounded. Since $X$ is locally compact, there exists $h^{\ast}\in u^{-1}(t)$ such that $h_{n}\rightarrow h^{\ast}$, thus
$$
u(x)=\lim_{n\rightarrow\infty}u_{n}(x)=\lim_{n\rightarrow\infty}[t+d(x,h_{n})]=t+d(x,h^{\ast})\geq t+d(x,u^{-1}(-\infty,t)).
$$
On the other hand, since $u$ is 1-Lipschitz, for any $y\in u^{-1}(t), u(x)-t=u(x)-u(y)\leq d(x,y)$, then
\begin{align*}
u(x)\leq\inf_{y\in u^{-1}(t)}[t+d(x,y)]=t+d(x,u^{-1}(t))=t+d(x,u^{-1}(-\infty,t)),
\end{align*}
which completes the proof.
\end{proof}

\medskip

\subsection{A representation formula of dl-functions}\label{sec:representation formula}
Before ending this section, we introduce an independent result --a representation formula for any dl-functions by Busemann functions. For convenience, we recall the formal definition of Busemann functions here.
\begin{definition}\label{b}
Let $\gamma:[0,+\infty)\rightarrow X$ be any unit speed geodesic ray starting from $x_{0}$ ($\gamma$ always exists by the assumptions imposed on $X$), and the function $b_{\gamma}:X\rightarrow\mathbb{R}$ \,defined as
\begin{equation*}
b_{\gamma}(x):=\lim_{t\rightarrow+\infty}[d(x,\gamma(t))-t]
\end{equation*}
is called the Busemann function associated to $\gamma$.
\end{definition}
Note that the Busemann functions are the prototype of all dl-functions. Indeed, for a unit speed geodesic ray $\gamma:[0,\infty)\rightarrow X$, by taking $H_{n}=\{\gamma(n)\},c_{n}=n$ in Definition~\ref{d-l}, one obtains the Busemann functions associated to $\gamma$. Let $\mathscr{R}(u)$ denote the set of all $u$-rays, then
\begin{prop}\label{representation formula}
For any dl-function $u:X\rightarrow\mathbb{R}$ and $x\in X$,
\begin{equation*}
u(x)=\inf_{\gamma\in\mathscr{R}(u)}[u(\gamma(0))+b_{\gamma}(x)].
\end{equation*}
\end{prop}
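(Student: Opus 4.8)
The plan is to prove the formula by establishing the two inequalities separately, with the observation that the lower one is in fact attained by a single co-ray, so no limiting family is needed.

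For the bound $u(x)\le u(\gamma(0))+b_{\gamma}(x)$, valid for \emph{every} $\gamma\in\mathscr{C}(u)$, I would start from Proposition~\ref{gradient} with $t_{1}=0$, which gives $u(\gamma(t))=u(\gamma(0))-t$ for all $t\ge 0$. Combining this with the fact that $u$ is $1$-Lipschitz (Remark~\ref{1-Lip}) yields
\[
u(x)\le u(\gamma(t))+d(x,\gamma(t))=u(\gamma(0))+\big(d(x,\gamma(t))-t\big),
\]
and letting $t\to\infty$ produces $u(x)\le u(\gamma(0))+b_{\gamma}(x)$ by Definition~\ref{b}. Taking the infimum over $\mathscr{C}(u)$ gives ``$\le$'' in the asserted identity.

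For the reverse inequality it suffices to exhibit one co-ray making the bracket equal to $u(x)$. By Proposition~\ref{pro co-ray}(1) there is a co-ray $\gamma\in\mathscr{C}(u)$ with $\gamma(0)=x$; since such a $\gamma$ is a unit speed geodesic ray issuing from $x$, we have $d(x,\gamma(t))=t$ for all $t\ge 0$, hence $b_{\gamma}(x)=\lim_{t\to\infty}(t-t)=0$ and therefore $u(\gamma(0))+b_{\gamma}(x)=u(x)$. This forces $\inf_{\gamma\in\mathscr{C}(u)}[u(\gamma(0))+b_{\gamma}(x)]\le u(x)$, and together with the previous step the formula follows.

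I do not expect a genuine obstacle here: the argument only assembles Proposition~\ref{gradient}, Proposition~\ref{pro co-ray}, the $1$-Lipschitz property, and the definition of Busemann functions. The one place worth a careful line is the passage to the limit defining $b_{\gamma}$, which relies on the standard monotonicity of $t\mapsto d(x,\gamma(t))-t$ together with its lower bound $-d(x,\gamma(0))$; this is precisely what makes Definition~\ref{b} meaningful, and it is used implicitly when taking $t\to\infty$ in the upper-bound estimate.
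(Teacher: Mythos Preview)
Your proof is correct and follows essentially the same two-inequality strategy as the paper. The only cosmetic difference is in the upper bound: the paper returns to the defining limit $u(x)=\lim_{n}[d(x,H_{n})-c_{n}]$ and applies the triangle inequality through $\gamma(t)$, whereas you invoke the $1$-Lipschitz property of $u$ directly; both routes land on $u(x)\le u(\gamma(0))+(d(x,\gamma(t))-t)$ before sending $t\to\infty$, and the reverse inequality via a co-ray issuing from $x$ is identical.
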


\begin{proof}
By the definition of $u$, we have for $t\geq0$,
\begin{align*}
u(x)=& \lim_{n\rightarrow\infty}[d(x,H_{n})-c_{n}]\\
\leq\,&\lim_{n\rightarrow\infty}[d(x,\gamma(t))+d(\gamma(t),H_{n})-c_{n}]\\
=\,&[d(x,\gamma(t))-t]+ \lim_{n\rightarrow\infty} [t+d(\gamma(t),H_{n})-c_{n}]\\
=\,&[d(x,\gamma(t))-t]+[t+u(\gamma(t))].  \\
\end{align*}
Since $\gamma$ is a $u$-ray, it follows that $u(\gamma(0))-u(\gamma(t))=t$. Thus the above inequality reads as $u(x)\leq[d(x,\gamma(t))-t]+u(\gamma(0))$. Sending $t\rightarrow+\infty$, we obtain
$$
u(x)\leq\inf_{\gamma\in\mathscr{R}(u)}[u(\gamma(0))+b_{\gamma}(x)].
$$
On the other hand, by (i) of Proposition \ref{pro co-ray}, there is a $u$-ray $\gamma_{x}:[0,+\infty)\rightarrow X$ starting from $x\in X$. Thus,
$$
u(x)=u(\gamma_{x}(0))+b_{\gamma_{x}}(x)\geq\inf_{\gamma\in\mathscr{R}(u)}[u(\gamma(0))+b_{\gamma}(x)].
$$
This completes the proof of the proposition.
\end{proof}

\section{Set-assigned dl-functions and the induced pseudo-metric}
In this section, we prove that set-assigned dl-functions are well-defined and satisfy a simple, but useful observation: they attain the infimum within the set of all dl-functions that are nonnegative on the assigned set. Deduced from this observation, we obtain a direct but useful tool to study the large scale geometry, that is a pseudo-metric on the set of all compact subsets of $X$, i.e., $\mathcal{K}(X)$. Finally, we introduce the set of exposed points within a given compact subset.

\medskip
\subsection{Well-definedness of set-assigned dl-functions and their minimal characterizations}
The task of this part is to equip us with two elementary facts forming building blocks of this paper. First, we show the existence of set-assigned dl-functions, i.e., give a
\begin{proof}[Proof of Theorem \ref{sa sol}]
For any $x\in X$, we claim that $u^{r}_{K_0}(x)$ is
\begin{itemize}
  \item bounded from above,
  \item monotone increasing with respect to $r$ for $r\geq\sup_{x_0\in K_0}d(x,x_0)$.
\end{itemize}
Thus by sending $r\rightarrow+\infty$, $u_{K_0}(x):=\lim_{r\rightarrow+\infty}u^{r}_{K_0}(x)$ exists.

\medskip
First notice that
\begin{align*}
&u^{r}_{K_0}(x)=d(x,S\!_r(K_0))-r=d(x,x_r)-r\\
\leq&\,d(x, x_0)+d(x_r, x_0)-r=d(x,x_{0})\leq\sup_{x_0\in K_0} d(x,x_{0}),
\end{align*}
where $x_r\in S\!_r(K_0)$ is the foot of $x$ on $S\!_r(K_0)$ and $x_{0}$ is the foot of $x_r$ on $K_0$. This shows the first claim.

\medskip
On the other hand, for any $r_{2}\geq r_{1}\geq \sup_{x_0\in K_0}d(x,x_0)$, by compactness of $S\!_r(K_{0})$, there exist $y_{1}\in S_{r_{1}}(K_{0}), y_{2}\in S_{r_{2}}(K_{0})$ such that
\begin{equation}\label{eq-2.1}
d(x,y_{1})=d(x,S_{r_{1}}(K_0)),\quad d(x,y_{2})=d(x,S_{r_{2}}(K_0)).
\end{equation}
Continuity of $d(\cdot,K_0)$ implies that any minimal geodesic connecting $x$ with $y_{2}$ must intersect $S_{r_{1}}(K_{0})$ at some point $z_{1}$, see the figure below.

\begin{figure}[h]
   \caption{Sketched proof of Theorem \ref{sa sol}}
  \vspace{0.5cm}
  \small \centering
  \includegraphics[width=6cm,height=6cm]{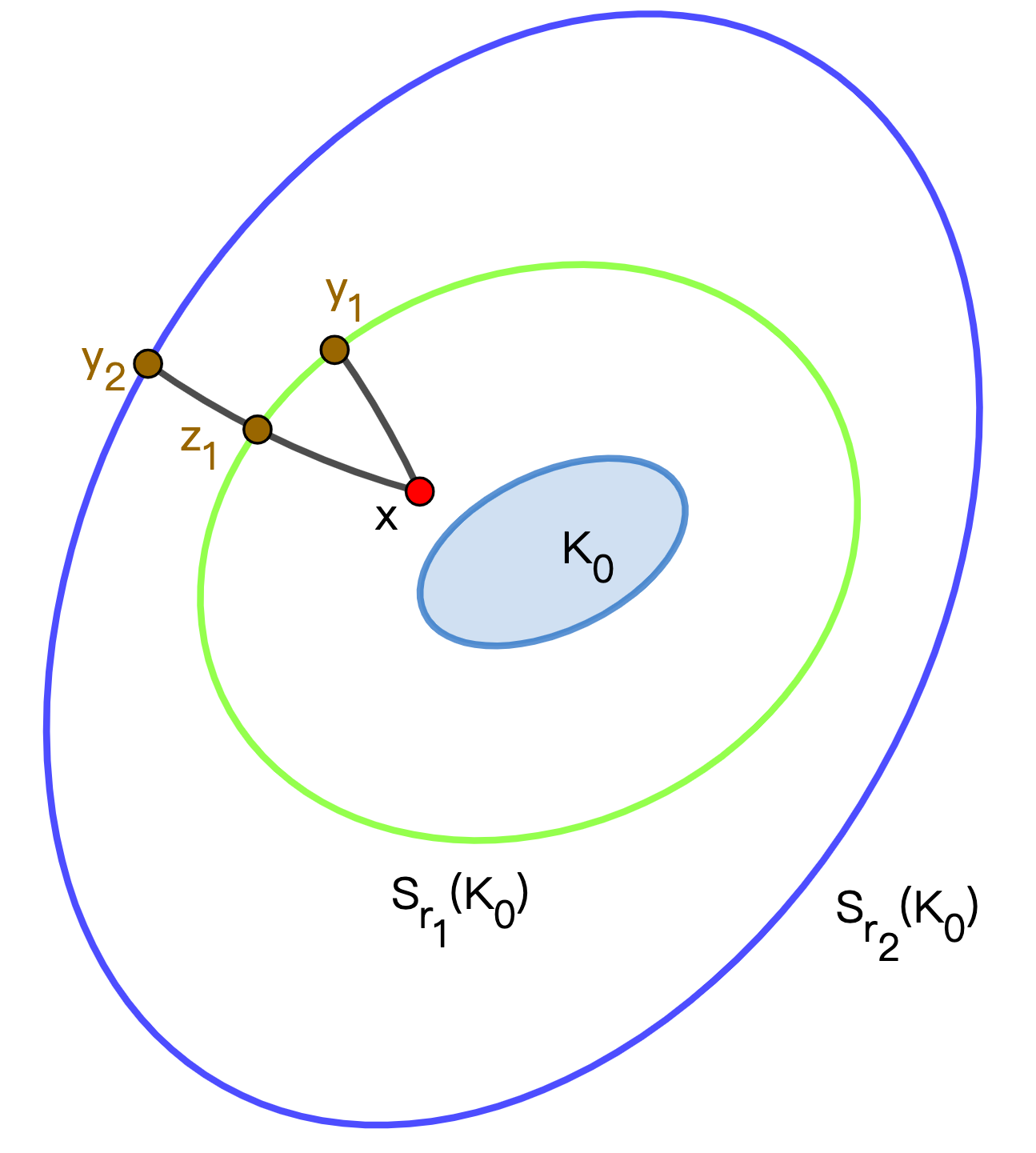}
  \label{picture of the proof}
\end{figure}

%\begin{figure}[h]
%   \caption{Sketched proof of Theorem \ref{sa sol}}
%  \vspace{0.5cm}
%  \small \centering
%  \includegraphics[width=5cm,height=5cm]{circle.eps}  \label{picture of the proof}
%\end{figure}

Therefore triangle inequality gives
\begin{equation}\label{eq-2.2}
d(x,y_{2})-d(x,z_{1})=d(z_{1},y_{2})\geq d(x_{0},y_{2})-d(x_{0},z_{1})=r_{2}-r_{1}.
\end{equation}
Combining \eqref{eq-2.2}, we obtain
\begin{equation*}
u^{r_{2}}_{K_{0}}(x)-u^{r_{1}}_{K_{0}}(x)=[d(x,y_{2})-d(x,y_{1})]-(r_{2}-r_{1})\geq d(x,z_{1})-d(x,y_{1})\geq 0,
\end{equation*}
where the last inequality follows from \eqref{eq-2.1}.
\end{proof}

From the definition of set-assigned dl-functions, we have
\begin{corollary}\label{sa base}
For any compact subset $K_{0}\subset X$,
\begin{enumerate}[(i)]
  \item $u_{K_0}|_{K_0}\geq0$ and $u_{K_0}|_{\text{Int}\,(K_0)}>0$,
  \item there is $x_0\in K_0$ such that $u_{K_0}(x_0)=0$, i.e., ep$(K_0)\neq\emptyset$.
\end{enumerate}
\end{corollary}

\begin{proof}
(i): For $r>0$ and $x_0\in K_{0}$, it is clear that $d(x_{0},S\!_r(K_0))\geq r$. Thus for every $r>0$, by definition,
\[
u^{r}_{K_0}(x_{0})\geq0.
\]
In particular, if $x_{0}\in\text{Int}\,(K_{0})$, there is $\delta>0$ such that $B_{\delta}(x_{0})\subset K_0$, then for any $r\geq\sup_{z_0\in K_0} d(x_0,z_0)$, there are $y\in S\!_r(K_0)$ and $z_0\in S_{\delta}(x_{0})$ such that
\begin{align*}
&u^{r}_{K_0}(x_{0})=d(x_{0},S\!_r(K_0))-r=d(x_{0},y)-r\\
=&\,\,d(x_{0},z_0)+[d(z_0,y)-r]\geq d(x_{0},z_0)+[d(K_0,y)-r]=\delta.
\end{align*}
Letting $r$ go to infinity, the conclusions of (i) hold.

\medskip
(ii): For every $n\geq1$, let $x_n$ be one of the foots of $S_n(K_0)$ on $K_0$, i.e., $d(x_n, S_n(K_0))=n$ and $x_n\in K_0$. Since $K_0$ is compact, by passing to a subsequence, we assume $x_n\rightarrow x_0\in K_0$. Then we have
\[
u_{K_0}(x_0)=\lim_{n\rightarrow\infty}[d(x_0, S_n(K_0))-n]=\lim_{n\rightarrow\infty}[d(x_{n}, S_n(K_0))-n]=0,
\]
or, equivalently, $x_0\in\,$ep\,$(K_0)$.
\end{proof}

As discussed in the introduction, the second elementary fact succeeds in characterizing set-assigned dl-functions among all dl-functions that are nonnegative on the base set. Now we turn into the

\begin{proof}[Proof of Theorem \ref{sa-Min}]
Let $u:X\rightarrow\mathbb{R}$ be a dl-function with $u|_{K_0}\geq0$. By (i) of Proposition \ref{pro co-ray}, for any $x\in X$, there exists a $u$-ray $\gamma_{x}:[0,+\infty)\rightarrow X$ starting from $x$ such that for $t\geq0$,
$$
u(x)-u(\gamma_{x}(t))=t=d(x,\gamma_{x}(t)).
$$
For $r>\sup_{x_0\in K_0}d(x,x_{0})$, we assume that $\gamma_{x}$ intersect $S\!_r(K_{0})$ at $\bar{y}$. Then by \eqref{r-sphere}, let $x_0\in K_0$ be the foot of $\bar{y}$ on $S\!_r(K_{0})$, it follows that
\begin{align*}
u(x)&\,=u(\bar{y})+d(x,\bar{y})\geq u(\bar{y})+d(x,S\!_r(K_{0}))\\
&\,\geq u(x_{0})-d(x_{0},\bar{y})+d(x,S\!_r(K_{0}))\\
&\,=u(x_{0})-r+d(x,S\!_r(K_{0}))\\
&\,\geq d(x,S\!_r(K_{0}))-r=u^{r}_{K_{0}}(x),
\end{align*}
where the second inequality holds since $u$ is 1-Lipschitz, see Remark \ref{1-Lip}, and the last inequality uses the fact that $u(x_{0})\geq0$. Now sending $r\rightarrow+\infty$, we could apply Theorem \ref{sa sol} to complete the proof.
\end{proof}

Combining Theorem \ref{sa-Min} and the above Corollary~\ref{sa base} gives
\begin{corollary}\label{K-u}
If $K_0\subset K_1\subset X$, then $u_{K_0}\leq u_{K_1}$.
\end{corollary}

\medskip
Recall that $\mathcal{K}(X)$ denotes the space of all compact subset of $X$ and let  $d_{H}(\cdot,\cdot)$ denote  the Hausdorff metric on $\mathcal{K}(X)$ induced by $d$, i.e., for $K_{0},K_{1}\in\mathcal{K}(X)$,
\[
d_{H}(K_{0},K_{1})=\max\{\max_{x\in K_1} d(x,K_0),\max_{x\in K_0} d(x,K_1)\}.
\]
As another application of Theorem \ref{sa-Min}, we shall show the following

\begin{prop}\label{hauss}
The map
$$
U:(\mathcal{K}(X),d_{H})\rightarrow(C(X,\mathbb{R}),d_{\infty}), \quad K_{0}\mapsto u_{K_{0}}
$$
is 1-Lipschitz, where $d_{\infty}$ is a metric on $C(X,\mathbb{R})$ defined by
\begin{equation}\label{d_infinity}
d_{\infty}(u,v):=\sup_{x\in X}|u(x)-v(x)|,
\end{equation}
for any $u,v\in C(X,\mathbb{R})$.
\end{prop}

\begin{proof}
For $K_{0},K_{1}\in\mathcal{K}(X)$, define $v:X\rightarrow\mathbb{R}$ as $x\mapsto u_{K_0}(x)+d_{H}(K_{0},K_{1})$. For every $x_{1}\in K_1$, by the definition of Hausdorff metric, there exists $x_{0}\in K_0$ such that $d(x_0,x_1)\leq d_{H}(K_{0},K_{1})$. Thus by $1$-Lipschitz property of dl-functions, we have
\[
v(x_1)\geq v(x_0)-d(x_0,x_1)=u_{K_0}(x_0)+[d_{H}(K_{0},K_{1})-d(x_0,x_1)]\geq0.
\]
Thus Theorem \ref{sa-Min} implies that for any $x\in X, v(x)\geq u_{K_1}(x)$ or
\[
u_{K_0}(x)-u_{K_1}(x)\geq-d_{H}(K_{0},K_{1}).
\]
Exchange the role of $K_0$ and $K_1$, we obtain $u_{K_1}(x)-u_{K_0}(x)\geq-d_{H}(K_{0},K_{1})$, and this completes the proof.
\end{proof}

\medskip
\subsection{A pseudo-metric on $\mathcal{K}(X)$ and the quotient metric space}
To establish Theorem \ref{pm}, we show an ``anti-triangle'' inequality among set-assigned dl-functions. For any two compact sets $K, K^{\prime}\subset X$, we shall use the notation
\[
u_{K}(K^{\prime}):=\min_{x\in K^{\prime}}\,\,u_{K}(x).
\]
\begin{lemma}\label{anti-triangle}
For any triple compact sets $K_0,K_1,K_2\in\mathcal{K}(X)$, we have
\begin{equation}\label{a-t}
u_{K_0}(K_1)+u_{K_1}(K_2)\leq u_{K_0}(K_2).
\end{equation}
\end{lemma}

\begin{proof}
Given any $K_0,K_1\subset X$, note that $v:X\rightarrow\mathbb{R}$ defined by
$$
v(z):=u_{K_0}(z)-u_{K_0}(K_1)
$$
is a dl-function which is nonnegative on $K_1$. By Theorem \ref{sa-Min}, for any $x\in X$,
\begin{equation*}
u_{K_1}(x)\leq v(x)=u_{K_0}(x)-u_{K_0}(K_1).
\end{equation*}
Thus we have
\[
u_{K_1}(K_2)=\min_{x\in K_2}u_{K_1}(x)\leq\min_{x\in K_2}[u_{K_0}(x)-u_{K_0}(K_1)]=u_{K_0}(K_2)-u_{K_0}(K_1).
\]
This completes the proof.
\end{proof}

Now we ready to give the
\medskip
\begin{proof}[Proof of Theorem \ref{pm}]
The symmetric property of $\rho:\mathcal{K}(X)\times\mathcal{K}(X)\rightarrow\mathbb{R}$ follows directly from the definition \eqref{pseudo metric}. Due to Lemma \ref{anti-triangle}, it is easy to see that for any triple $K_0,K_1,K_2\in\mathcal{K}(X)$,
\begin{align*}
&\,\rho(K_0,K_1)=-\frac{1}{2}[u_{K_0}(K_1)+u_{K_1}(K_0)]\geq-\frac{1}{2}u_{K_0}(K_0)=0,\\
&\,\rho(K_0,K_2)=-\frac{1}{2}[u_{K_0}(K_2)+u_{K_2}(K_0)]\\
\leq&\,-\frac{1}{2}[u_{K_0}(K_1)+u_{K_1}(K_2)+u_{K_2}(K_1)+u_{K_1}(K_0)]\\
=&\,-\frac{1}{2}[u_{K_0}(K_1)+u_{K_1}(K_0)]-\frac{1}{2}[u_{K_2}(K_1)+u_{K_1}(K_2)]\\
=&\,\rho(K_0,K_1)+\rho(K_1,K_2).
\end{align*}
By elementary knowledge of metric geometry, any pseudo metric defines an equivalence relation \cite[Proposition 1.1.5]{BBI}, i.e., $K_0\sim K_1$ if and only if $\rho(K_0,K_1)=0$. We introduce below an alternative formulation of this equivalence relation.

\medskip
\textit{Claim:} The following two assertions are equivalent:
\begin{enumerate}[(i)]
  \item  $K_0\sim K_1$;
  \item $\exists\,\, c\in \mathbb{R}$ such that $u_{K_0} = u_{K_1} + c$.
\end{enumerate}

\medskip
(i)$\Rightarrow$(ii): For any $x\in X$, choosing $K_2=\{x\}$ in \eqref{a-t}, we obtain that
\begin{equation}\label{ieq:1}
u_{K_0}(x)\geq u_{K_1}(x)+u_{K_0}(K_1).
\end{equation}
Since the position of $K_0, K_1$ in \eqref{ieq:1} is symmetric, we could exchange $K_0$ and $K_1$ to obtain that for any $x\in X$,
\[
u_{K_1}(x)\geq u_{K_0}(x)+u_{K_1}(K_0).
\]
But the assumption $K_0\sim K_1$ implies $u_{K_0}(K_1)=-u_{K_1}(K_0)$. So the above inequality reads as
\[
u_{K_0}(x)\leq u_{K_1}(x)+u_{K_0}(K_1),
\]
which completes the proof of theorem and shows that $c=u_{K_0}(K_1)$.

\medskip
(ii)$\Rightarrow$(i): Assume there is $c\in\mathbb{R}$ such that $u_{K_0},u_{K_1}$ satisfies
\begin{equation}\label{equi}
u_{K_0}(x)=u_{K_1}(x)+c,\quad\text{for any }x\in X.
\end{equation}
Since $u_{K}$ is non-negative on $K$,
\[
u_{K_0}(K_1)=\min_{x\in K_1}u_{K_0}(x)=\min_{x\in K_1}[u_{K_1}(x)+c]\geq c.
\]
On the other hand, by Corollary \ref{sa base}, choosing $x\in\text{ep}(K_1)$ in \eqref{equi}, it follows that
\[
u_{K_0}(K_1)\leq\min_{x\in\text{ep}K_1}u_{K_0}(x)=c\quad \text{and}\quad c=u_{K_0}(K_1).
\]
Again by definition of $\rho$ and \eqref{equi},
\[
0=\min_{x\in K_0}u_{K_0}(x)=\min_{x\in K_0}[u_{K_1}(x)+u_{K_0}(K_1)]=-2\rho(K_0,K_1).
\]
Hence $K_0\sim K_1$.
\end{proof}

\medskip
Furthermore, applying Proposition \ref{hauss}, the above facts lead  to the following immediate
\begin{corollary}\label{eqc-c}
For any $K_0,K_1\in\mathcal{K}(X)$,
\[
\rho(K_0,K_1)\leq d_{H}(K_0,K_1).
\]
In particular, the equivalence class of $\mathcal{K}(X)$ under $\sim$ are closed in $(\mathcal{K}(X),d_H)$.
\end{corollary}

From the point of view of metric geometry, it is natural to consider the quotient metric space $(\mathcal{K}(X)/\!\!\sim,d_{\text{{sa}}})$ induced by $\rho$. We use $[K_0]$ to denote the equivalence class of $K_0$ under $\sim$. Again following directly from Corollary \ref{eqc-c}, for any $K^{\prime}_0\in[K_0],K^{\prime}_1\in[K_1]$,
\[
d_{\text{sa}}([K_0],[K_1])\leq d_{H}(K^{\prime}_0,K^{\prime}_1).
\]
Thus every path on $(\mathcal{K}(X),d_H)$ reduces to a path on $(\mathcal{K}(X),d_{\text{sa}})$. Since $(X,d)$ is path-connected, so is $(\mathcal{K}(X),d_H)$ and the discussion implies the following

\begin{corollary}
$(\mathcal{K}(X)/\!\!\sim,d_{\text{sa}})$ is a path-connected metric space.
\end{corollary}

Fix $x_{0}\in X$, we identify $C(X,\mathbb{R})/\mathbb{R}$ with all continuous functions on $X$ vanishing at $x_{0}$, the metric $d_{\infty}$ given in \eqref{d_infinity} naturally acts on this subspace.
\begin{prop}
The map $U_{x_{0}}:(\mathcal{K}(X),\rho)\rightarrow(C(X,\mathbb{R})/\mathbb{R},d_{\infty})$,
\[
K_0\mapsto u_{K_0}(\cdot)-u_{K_0}(x_{0})
\]
is Lipschitz.
\end{prop}

\begin{proof}
By the definition of $d_{\infty}$, we have
\begin{align*}
&\,d_{\infty}(U_{x_{0}}(K_0),U_{x_{0}}(K_1))\\
=&\,\sup_{x\in X}|u_{K_0}(x)-u_{K_0}(x_{0})-u_{K_1}(x)+u_{K_1}(x_{0})|\\
\leq&\,\sup_{x\in X}|[u_{K_0}(x)-u_{K_1}(x)]+[u_{K_1}(x_{0})-u_{K_0}(x_{0})]|
\end{align*}
and by Theorem \ref{sa-Min}, for any $x_0,x\in X$, we get
\begin{equation*}
\begin{split}
u_{K_0}(K_1)\leq u_{K_0}(x)-u_{K_1}(x)\leq -u_{K_1}(K_0),\\
u_{K_1}(K_0)\leq u_{K_1}(x_{0})-u_{K_0}(x_{0})\leq -u_{K_0}(K_1).
\end{split}
\end{equation*}
We add the above two inequalities and take supremum in $x\in X$ to obtain
\[
-2\rho(K_0,K_1)\leq d_{\infty}(U_{x_{0}}(K_0),U_{x_{0}}(K_1))\leq 2\rho(K_0,K_1).
\]
This completes the proof.
\end{proof}
Moreover, we obtain
\begin{corollary}
The map $U_{x_{0}}$ induces a Lipschitz injection
\[
\bar{U}:(\mathcal{K}(X)/\!\!\sim,d_{\text{sa}})\rightarrow(C(X,\mathbb{R})/\mathbb{R},d_{\infty}).
\]
\end{corollary}

%{\color{red}Before ending this section}, we want to raise
%\begin{problem}
%Is $(\mathcal{K}(X)/\!\!\sim,d_{\text{sa}})$ a geodesic space? What is the geometric meaning of the geodesics?
%\end{problem}

\medskip
\subsection{Exposed points of a given compact set}
As is pointed out in the introduction, for any $K_0\in\mathcal{K}(X)$, the set-assigned dl-function $u_{K_0}$ is determined   only by part of the assigned set. This fact is formulated via an important operator ep$:\mathcal{K}(X)\rightarrow\mathcal{K}(X)$. We are ready to give

\medskip
\begin{proof}[Proof of Theorem \ref{ep}]
(i):  The conclusion that ep$(K_0)\subseteq\partial K_0$ and is nonempty follows from Corollary~\ref{sa base}. Note that ep$(K_0)\subseteq K_0$, and thus by Corollary~\ref{K-u}, for any $x\in X$, we obtain
\[
u_{\text{ep}(K_0)}(x)\leq u_{K_0}(x).
\]
Combining the definition of ep$(K_0)$, we conclude that
\[
u_{\text{ep}(K_0)}|_{\text{ep}(K_0)}\equiv0.
\]
This is equivalent to the statement ep\,$\circ$\,ep\,$(K_0)$\,=\,ep\,$(K_0)$.

\medskip
(ii): By definition, $K_0\sim\text{ep}(K_0)$ is equivalent to the fact
\[
\rho(K_0, \text{ep}(K_0))=-\frac{1}{2}[u_{K_0}(\text{ep}(K_0))+u_{\text{ep}(K_0)}(K_0)]=-\frac{1}{2}u_{\text{ep}(K_0)}(K_0)=0,
\]
and thus we only need to show that $u_{\text{ep}(K_0)}(K_0)=0$.

\medskip
Assume on the contrary, there is $x_{\ast}\in K_0\setminus\text{ep}(K_0)$ such that $u_{\text{ep}(K_0)}(x_{\ast})<0$. Then there exist $\epsilon>0, r_0>0$ such that for $r\geq r_0$, since $x_{\ast}\in K_0$, we have
\begin{equation}\label{ieq:2}
\begin{split}
u^{r}_{K_0}(x_{\ast})&\,=d(x_{\ast},S\!_{r}(K_0))-r\geq-\epsilon,\\
u^{r}_{\text{ep}(K_0)}(x_{\ast})&\,=d(x_{\ast},S\!_{r}(\text{ep}(K_0)))-r<-2\epsilon.
\end{split}
\end{equation}
By definition of ep, there is $r_1>0$ such that for any $x\in S\!_{r}(K_0)$ with $r\geq r_1$,
\begin{equation}\label{ieq:3}
d(x,\text{ep}(K_0))-d(x,K_0)\leq\epsilon.
\end{equation}
We choose $r_2\geq\max\{r_0,r_1\}$ such that for $r\geq r_2$,
\[
d(K_0,S\!_{r}(\text{ep}(K_0)))\geq r_1.
\]
By \eqref{ieq:3}, for any $x^{\prime}\in S\!_{r}(\text{ep}(K_0))$, we obtain
\[
d(x^{\prime}, S\!_{r}(K_0))\leq\epsilon.
\]
Thus there is $x_r\in S\!_{r}(\text{ep}(K_0))$ such that
\begin{align*}
\,&u^{r}_{\text{ep}(K_0)}(x_{\ast})=d(x_{\ast},S\!_{r}(\text{ep}(K_0)))-r\\
=\,&d(x_{\ast},x_r)-r\geq\,d(x_{\ast},S\!_r(K_0))-r-\epsilon\\
\geq\,&u^{r}_{K_0}(x_{\ast})-\epsilon\geq-2\epsilon,
\end{align*}
which contradicts the second inequality of \eqref{ieq:2}.
\end{proof}

\begin{remark}\label{ep-2}
Heuristically, ep$(K_{0})$ consists of points of $K_0$ that are visible from infinity. In general ep$(K_0)\neq\partial K_0$. For instance, one can easily show that if $X$ is a Hadamard manifold (a complete, simply connected Riemannian manifold of nonpositive curvature), then ep$(K_0)=\,$co$K_0\cap\partial K_0$ for every compact $K_0\subset X$, where co$K$ denotes the smallest geodeiscally convex set containing $K$.
\end{remark}

According to Proposition \ref{pro co-ray}, for two non-empty compact sets $K_0, K$, we would like to introduce the following notation
\[
\Phi^{t}_{K_0}(K)=\{\gamma(t)\in X\,|\,\gamma:[0,t]\rightarrow X\text{ is a segment of co-ray of }u_{K_0} \text{ with }\gamma(0)\in K\}.
\]
Notice that fixing $K_0\in\mathcal{K}(X),  \Phi^{\cdot}_{K_0}:[0,+\infty)\times\mathcal{K}(X)\rightarrow\mathcal{K}(X)$ defines a semiflow on $\mathcal{K}(X)$, i.e., for any $t,s\geq0$ and $K\in\mathcal{K}(X)$,
\[
\Phi^{t+s}_{K_0}(K)=\Phi^{t}_{K_0}\circ\Phi^{s}_{K_0}(K).
\]
The equivalence between ep$(K_0)$ and its image under $\Phi^{t}_{K_0}$ is implied by
\begin{prop}\label{equi-2}
For every $t\geq0$,
\[
u_{\Phi^{t}_{K_0}(\text{ep}(K_0))}(x)=u_{\text{ep}(K_0)}(x)+t\quad \text{for any}\quad x\in X.
\]
In particular, $\Phi^{t}_{K_0}(\text{ep}(K_0))\sim\text{ep}(K_0)$.
\end{prop}

\begin{proof}
By (i) of Theorem \ref{ep}, $u_{\text{ep}(K_0)}|_{\text{ep}(K_0)}\equiv0$. Proposition \ref{gradient} gives
\[
u_{\text{ep}(K_0)}(x)\equiv-t,\quad\text{for}\quad x\in\Phi^{t}_{K_0}(\text{ep}(K_0)).
\]
Thus $u_{\text{ep}(K_0)}(x)+t$ is nonnegative on $\Phi^{t}_{K_0}(\text{ep}(K_0))$ and by Theorem \ref{sa-Min},
\begin{equation}\label{ieq:4}
u_{\Phi^{t}_{K_0}(\text{ep}(K_0))}(x)-t\leq u_{\text{ep}(K_0)}(x),\quad\text{for any}\quad x\in X.
\end{equation}

\medskip
For the other direction, i.e.,
\[
u_{\text{ep}(K_0)}(x)\leq u_{\Phi^{t}_{K_0}(\text{ep}(K_0))}(x)-t,
\]
again by Theorem \ref{sa-Min}, it is sufficient to show that
\[
u_{\Phi^{t}_{K_0}(\text{ep}(K_0))}(x)-t\geq0\quad\text{for any }x\in\text{ep}(K_0).
\]
We shall argue by contradiction. Assume there exist $\epsilon>0$ and $x_{\ast}\in\,\,$ep$(K_0)$ such that
\[
u_{\Phi^{t}_{K_0}(\text{ep}(K_0))}(x_{\ast})-t<-2\epsilon.
\]
Thus there is a diverging sequence $\{x_i\}_{i\in\mathbb{N}}$ with
\begin{align*}
&d(x_i,\text{ep}(K_0))-d(x_i,\Phi^{t}_{K_0}(\text{ep}(K_0)))-t\\
\leq&\,[d(x_i,x_{\ast})-d(x_i,\Phi^{t}_{K_0}(\text{ep}(K_0)))]-t\\
=&\,[d(S_{r_i}(\Phi^{t}_{K_0}(\text{ep}(K_0))),x_{\ast})-r_i]-t\\
=&\,u^{r_i}_{\Phi^{t}_{K_0}(\text{ep}(K_0))}(x_{\ast})-t<-\epsilon,
\end{align*}
where $x_i\in S_{r_i}(\Phi^{t}_{K_0}(\text{ep}(K_0)))$ such that
\[
d(x_i, x_{\ast})=d(S_{r_i}(\Phi^{t}_{K_0}(\text{ep}(K_0))),x_{\ast})
\]
and $r_i$ diverges to infinity.

\medskip
Let $\gamma_i$ be a minimal geodesic connecting ep$(K_0)$ with $x_i$ and $\gamma_i(0)\in$\,ep$(K_0)$. By the compactness of ep$(K_0)$, $\gamma_i$ converges uniformly to some co-ray $\gamma:[0,+\infty)\rightarrow X$ of $u_{\text{ep}(K_0)}$ on compact intervals. Then $\gamma_i(t)\rightarrow\gamma(t)\in\Phi^{t}_{K_0}(\text{ep}(K_0))$, thus for $i$ sufficiently large,
\begin{align*}
&\,0=d(x_i,\gamma_i(0))-d(x_i,\gamma_i(t))-t\\
\leq&\,[d(x_i,\text{ep}(K_0))-d(x_i,\Phi^{t}_{K_0}(\text{ep}(K_0)))-t]+\epsilon<0.
\end{align*}
This contradiction completes the proof.
\end{proof}

\section{Gromov-Hausdorff metric and set-assigned dl-functions}
It is well-known that the Gromov-Hausdorff metric is defined on the space of metric spaces. In this section, we shall present a result showing that set-assigned dl-functions are continuous with respect to the underlying set-assigned geodesic spaces in the sense of Gromov-Hausdorff metric. For convenience, we use the notation
\[
d_{GH}:=d_{GH}((X_0, d_0, K_0),(X_1, d_1, K_1))
\]
if there is no confusion. To go further, we introduce the notion of correspondence between two set-assigned metric spaces and its distortion. Note that our definition is a modification of the standard one \cite[Definition 7.3.17, 7.3.21]{BBI} without concerning the assigned set.
\begin{definition}\label{cor-dis}
Let $(X_0, d_0, K_0)$ and $(X_1, d_1, K_1)$ be two set-assigned metric spaces, we call $\mathfrak{R}\subseteq X_0\times X_1$ a set-assigned correspondence if
\begin{itemize}
  \item for any $x_0\in K_0$, there exists $x_1\in K_1$ such that $(x_0,x_1)\in\mathfrak{R}$,
  \item for any $x_1\in K_1$, there exists $x_0\in K_0$ such that $(x_0,x_1)\in\mathfrak{R}$,
  \item for any $x_0\in X_0$, there exists $x_1\in X_1$ such that $(x_0,x_1)\in\mathfrak{R}$,
  \item for any $x_1\in X_1$, there exists $x_0\in X_0$ such that $(x_0,x_1)\in\mathfrak{R}$.
\end{itemize}
We associate every correspondence with a positive number, called distortion, by
\begin{equation}
\text{dis}\,\,\mathfrak{R}:=\sup\{\,|d_0(x_0,x'_0)-d_1(x_1,x'_1)|\, | \,(x_0,x_1),(x'_0,x'_1)\in\mathfrak{R}\}.
\end{equation}
\end{definition}

The distortion of a correspondence is equivalent to the Gromov-Hausdorff metric between the underlying two metric spaces.
\begin{prop}\label{d-dis}
\begin{equation}\label{GH-dis}
d_{GH}\leq\inf_{\mathfrak{R}\subseteq X_0\times X_1}\text{dis}\,\,\mathfrak{R}\leq2d_{GH}.
\end{equation}
\end{prop}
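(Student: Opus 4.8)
The plan is to mimic the classical argument relating correspondences to the Gromov--Hausdorff distance for compact spaces (see \cite[Theorem 7.3.25]{BBI}), being careful to track the base-points throughout. Throughout, write $d_{GH}=d_{GH}((X,d_X,x_0),(Y,d_Y,y_0))$.

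For the right-hand inequality $d_{GH}\le \inf_{\mathfrak{R}_p} \mathrm{dis}\,\mathfrak{R}_p$, I would fix a pointed correspondence $\mathfrak{R}_p$ with $\mathrm{dis}\,\mathfrak{R}_p=:r<\infty$ (if no such $\mathfrak{R}_p$ exists the inequality is vacuous) and define a candidate admissible metric $d$ on $X\sqcup Y$ by setting, for $x\in X$ and $y\in Y$,
\[
d(x,y):=\inf\{\,d_X(x,x')+\tfrac{r}{2}+d_Y(y',y)\ :\ (x',y')\in\mathfrak{R}_p\,\},
\]
and of course $d|_{X\times X}=d_X$, $d|_{Y\times Y}=d_Y$. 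The routine steps are: (a) check $d$ is a genuine metric on the disjoint union — symmetry is clear, and the triangle inequality for mixed triples reduces to the definition of $\mathrm{dis}\,\mathfrak{R}_p$ exactly as in the compact case; the additive constant $r/2$ is what forces $d(x,y)>0$ and makes the cross triangle inequalities work. (b) Since $(x_0,y_0)\in\mathfrak{R}_p$, we get $d(x_0,y_0)\le r/2$. (c) For the Hausdorff distance: given $x\in X$, pick $y$ with $(x,y)\in\mathfrak{R}_p$, so $d(x,y)\le r/2$; symmetrically for $y\in Y$. Hence $d_H(X,Y)\le r/2$ and therefore $d_{GH}\le d_H(X,Y)+d(x_0,y_0)\le r$. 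Taking the infimum over $\mathfrak{R}_p$ finishes this half.

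For the left-hand inequality $\inf_{\mathfrak{R}_p}\mathrm{dis}\,\mathfrak{R}_p\le 2d_{GH}$, I would take any admissible metric $d$ on $X\sqcup Y$ with $d_H(X,Y)+d(x_0,y_0)<d_{GH}+\delta$ for arbitrary $\delta>0$, and build a pointed correspondence by
\[
\mathfrak{R}_p:=\{(x_0,y_0)\}\ \cup\ \{\,(x,y)\in X\times Y\ :\ d(x,y)\le d_H(X,Y)\,\}.
\]
The three defining properties of a pointed correspondence hold: the first by construction; the second and third because $d_H(X,Y)<\infty$ means each point of $X$ is within $d_H(X,Y)$ of some point of $Y$ and vice versa. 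To estimate the distortion, take $(x,y),(x',y')\in\mathfrak{R}_p$; then by the triangle inequality in $(X\sqcup Y,d)$,
\[
|d_X(x,x')-d_Y(y,y')|=|d(x,x')-d(y,y')|\le d(x,y)+d(x',y')\le 2\,d_H(X,Y),
\]
with the same bound holding (even more easily) when one of the pairs is $(x_0,y_0)$, since $d(x_0,y_0)\le d_H(X,Y)$ is not automatic but $d(x_0,y_0)< d_{GH}+\delta$ and one bounds $d(x_0,y)\le d(x_0,y_0)+d_Y(y_0,y)$ — here I would instead simply note $d(x_0,y_0)\le d_H(X,Y)+d(x_0,y_0)$ suffices for the cross terms after regrouping. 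Thus $\mathrm{dis}\,\mathfrak{R}_p\le 2(d_H(X,Y)+d(x_0,y_0))<2(d_{GH}+\delta)$, and letting $\delta\to0$ gives the claim.

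The main technical point to watch — the one place the argument genuinely differs from the compact setting — is that both quantities may be $+\infty$, so at each step I must be sure the manipulations are valid (or vacuously true) in that case; in particular the right-hand inequality is content-free unless some correspondence has finite distortion. A secondary subtlety is the bookkeeping of the base-point pair $(x_0,y_0)$ in the distortion estimates and in verifying the cross triangle inequalities for $d$; I expect this to be straightforward once the constant $r/2$ is inserted correctly, but it is the step most prone to an off-by-a-factor error, so I would write it out carefully.
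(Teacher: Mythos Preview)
Your approach is essentially the paper's: build an admissible metric on $X\sqcup Y$ from a pointed correspondence (with additive constant equal to half the distortion) for the inequality $d_{GH}\le\inf\mathrm{dis}\,\mathfrak{R}_p$, and build a pointed correspondence from an admissible metric for the other direction.

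One small gap is worth flagging. In your construction for $\inf\mathrm{dis}\,\mathfrak{R}_p\le 2d_{GH}$ you set
\[
\mathfrak{R}_p=\{(x_0,y_0)\}\cup\{(x,y):d(x,y)\le d_H(X,Y)\},
\]
and claim every $x\in X$ admits $y\in Y$ with $d(x,y)\le d_H(X,Y)$. This requires the infimum $\inf_{y\in Y}d(x,y)$ to be attained, which need not hold for an arbitrary admissible metric on the disjoint union. The paper sidesteps this (and your somewhat awkward separate treatment of the base-point pair) by fixing any $r>d_{GH}$ with $d_H(X,Y)+d(x_0,y_0)<r$ and taking $\mathfrak{R}_p=\{(x,y):d(x,y)<r\}$: then $d_H(X,Y)<r$ gives the correspondence property directly, $d(x_0,y_0)<r$ places the base-point pair in $\mathfrak{R}_p$ automatically, and $\mathrm{dis}\,\mathfrak{R}_p<2r$ follows from the triangle inequality with no case analysis. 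Your argument is easily repaired the same way---replace the threshold $d_H(X,Y)$ by any strictly larger number---and with that adjustment it is complete.
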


\begin{proof}
For any $r>d_{GH}$, assume that $X_0,X_1$ are subspaces of some metric space $Z$ such that
\begin{equation}\label{assume}
d_{H}(X_0,X_1)+d_H(K_0,K_1)<r.
\end{equation}
We define
\begin{equation*}
\mathfrak{R}=\{(x_0,x_1)\in X_0\times X_1\,|\,d(x_0,x_1)<r\}.
\end{equation*}
Then by \eqref{assume}, we obtain that $\mathfrak{R}$ is a set-assigned correspondence. Since $d_{H}(X_0, X_1)<r$, if $(x_0,x_1),(x'_0,x'_1)\in\mathfrak{R}$,
\begin{equation*}
|d_0(x_0,x'_0)-d_1(x_1,x'_1)|\leq d(x_0,x_1)+d(x'_0,x'_1)<2r.
\end{equation*}

\vspace{1em}
Now we prove the other inequality. Assume $\text{ dis}\,\,\mathfrak{R}=2r$ for some correspondence. Given $x_0\in X_0,x_1\in X_1$, we define
\begin{equation*}
d(x_0,x_1)=\inf\,\,\{d_0(x_0,x'_0)+d_1(x_1,x'_1)+r\,|\,(x'_0,x'_1)\in\mathfrak{R}\}.
\end{equation*}
Then we have
\begin{itemize}
  \item $d$ satisfies triangle inequality and in fact, $d$ is metric on $X_0\sqcup X_1$,
  \item $d_{H}(X_0,X_1)\leq r, \quad d_{H}(K_0,K_1)\leq r$.
\end{itemize}
So we have $d_{GH}\leq2r=\text{ dis }\mathfrak{R}$.
\end{proof}

It is reasonable to believe that the existence of $\epsilon$-isometry is closely related to the smallness of Gromov-Hausdorff metric.
\begin{prop}\label{e-isometry}
Let $(X_0, d_0, K_0),(X_1, d_1, K_1)$ be two set-assigned metric spaces,
\begin{enumerate}
  \item if $d_{GH}<\epsilon$, then there exists a $2\epsilon$-isometry $f:(X_0,K_0)\rightarrow(X_1,K_1)$,
  \item if there exists an $\epsilon$-isometry $f:(X_0,K_{0})\rightarrow(X_1,K_{1})$, then $d_{GH}\leq3\epsilon$.
\end{enumerate}
\end{prop}

\begin{proof}
If $d_{GH}<\epsilon$, by Proposition \ref{d-dis}, there is a correspondence, say $\mathfrak{R}$, such that dis$\,\mathfrak{R}<2\epsilon$. Define $f:(X_0,K_0)\rightarrow(X_1,K_1)$ as: for $x_0\in X_0$ (in particular $x_0\in K_0$), choose $f(x_0)\in X_1$ (in particular $f(x_0)\in K_1$) with $(x_0,f(x_0))\in\mathfrak{R}$. It is clear that dis $f\leq$ dis$\,\mathfrak{R}<2\epsilon$. For $x_1\in X_1$, consider an $x_0\in X_0$ such that $(x_0,x_1)\in\mathfrak{R}$, both $x_1$ and $f(x_0)$ are in correspondence with $x_0$, then
\begin{equation}
d(x_1,f(x_0))\leq \text{ dis }\mathfrak{R}<2\epsilon,
\end{equation}
i.e., $f(X_0)$ is a $2\epsilon$-net in $X_1$. Similarly, $f(K_0)\subset K_1$ and is a $2\epsilon$-net of $K_1$. This completes the proof of (1).

\vspace{1em}
Let $f:(X_0,K_0)\rightarrow(X_1,K_1)$ be an $\epsilon$-isometry. Define
$$
\mathfrak{R}:=\{(x_0,x_1)\in X_0\times X_1: d_1(f(x_0),x_1)\leq\epsilon\},
$$
then by Definition \ref{cor-dis}, \,\,$\mathfrak{R}$ is a set-assigned correspondence. If $(x,y),(x',y')$ are two elements in $\mathfrak{R}$, then
\begin{align*}
&\,|d(y,y')-d(x,x')|\\
\leq&\,|d(f(x),f(x'))-d(x,x')|+d(y,f(x))+d(y',f(x'))\\
\leq&\,\text{ dis }f+2\epsilon\leq3\epsilon.
\end{align*}
Hence dis$\,\mathfrak{R}\leq3\epsilon$ so that $d_{GH}\leq3\epsilon$.
\end{proof}

Equipping with the above propositions, let us give
\begin{proof}[Proof of Theorem \ref{sa-GH}]
The existence of $2\epsilon$-isometry $f:(X_0,K_0)\rightarrow(X_1,K_1)$ is guaranteed by Proposition \ref{e-isometry}. By using the notations
\begin{equation*}
\begin{split}
u^{r}_{K_{0}}(x_0):=d_0(x_0,S\!_r(K_{0}))-r,\\
u^{r}_{K_{1}}(x_1):=d_1(x_1,S\!_r(K_{1}))-r,
\end{split}
\end{equation*}
defined in Section 3, there exist $x^{\ast}_1\in S\!_r(K_{1}),x^{\ast}_0\in X_0$ such that
\begin{equation*}
u_{K_{1}}^{r}(f(x_0))=d_1(f(x_0),x^{\ast}_1)-r\geq d_1(f(x_0),f(x^{\ast}_0))-r-2\epsilon,
\end{equation*}
where the existence of $x^{\ast}_0$ follows from the fact that $f(X)$ is a $2\epsilon$-net in $Y$ and it follows that
\begin{equation}\label{r'}
r^{\ast}:=d_{0}(x^{\ast}_0,K_{0})\in[r-6\epsilon, r+6\epsilon].
\end{equation}
Using the notation and estimate in \eqref{r'}, we obtain
\begin{align*}
&\,u^{r^{\ast}}_{K_{0}}(x_0)-u^{r}_{K_{1}}(f(x_0))=[d_0(x_0,S_{r^{\ast}}(K_{0}))-r^{\ast}]-u_{K_{1}}^{r}(f(x_0))\\
\leq&\,[d_0(x_0,x^{\ast}_0)-r^{\ast}]-[d_1(f(x_0),f(x^{\ast}_0))-r-2\epsilon]\\
=&\,[d_0(x_0,x^{\ast}_0)-d_1(f(x_0),f(x^{\ast}_0))]+[r-r^{\ast}]+2\epsilon\\
<&\,2\epsilon+6\epsilon+2\epsilon=10\epsilon.
\end{align*}
By sending $r\rightarrow\infty$ (so that $r^{\ast}\rightarrow\infty$ as well), $u_{K_{0}}(x_0)-u_{K_{1}}(f(x_0))<10\epsilon$.

\vspace{0.4cm}
On the other hand, there exists $x'_0\in S\!_r(K_{0})$ such that $d_0(x_0, S\!_r(K_{0}))=d_0(x_0,x'_0)$. Thus by setting $r':=d_1(f(x'_0),K_1)$, it is clear that $r'\in[r-4\epsilon,r+4\epsilon]$  and so
\begin{align*}
&\,u^{r'}_{K_{1}}(f(x_0))-u^{r}_{K_{0}}(x_0)=u_{K_{1}}^{r'}(f(x_0))-[d_0(x_0,x'_0)-r]\\
\leq&\,[d_1(f(x_0),f(x'_0))-r']-[d_0(x_0,x'_0)-r]\\
=&\,[d_1(f(x_0),f(x'_0))-d_0(x_0,x'_0)]+[r-r']\\
<&\,4\epsilon+2\epsilon=6\epsilon.
\end{align*}
By sending $r\rightarrow\infty$ (so that $r^{\ast}\rightarrow\infty$ as well), $u_{K_{1}}(f(x_0))-u_{K_{0}}(x_0)<6\epsilon$. This completes the proof.
\end{proof}

\section{Further results when $K$ is a singleton}
In this section, we consider the special case when assigned compact set $K_0$ is a singleton $\{x_0\}$. By regarding $X$ as a subset of $\mathcal{K}(X)$, we give further results concerning the metric geometry of $X$ via the restricted pseudo-metric $\rho|_{X\times X}$. More precisely, we use the simplified notion $u_{x_0}$ instead of $u_{\{x_0\}}$ and consider the subclass
\[
\{u_{x_0}\,|\,x_0\in X\}
\]
of set-assigned dl-functions. With a little abuse of notations, we define $\rho:X\times X\rightarrow\mathbb{R}$ by
\begin{equation}\label{pt-pm}
\rho(x_0,x_1)=-\frac{1}{2}[u_{x_0}(x_1)+u_{x_1}(x_0)].
\end{equation}
Applying Theorem \ref{pm} to singletons, we deduce that $\rho$ is a pseudo-metric on $X$. And this pseudo metric is just the restriction of the pseudo-metric $\rho$ defined by \eqref{pseudo metric} on $X\times X$. Now the non-negativeness of $\rho$ and Corollary \ref{eqc-c} read as for any $x_0,x_1\in X$,
\begin{equation}\label{eqc-c1}
0\leq \rho(x_0,x_1)\leq d(x_0,x_1).
\end{equation}

\subsection{Two extreme cases.}
The inequality \eqref{eqc-c1} leads to two extreme situations, and we can deduce geometric properties of $X$ from these information.

\vspace{1em}
Assume $x_{0}\in X$ satisfies that for any $x\in X$, there is a geodesic ray starting from $x_0$ and passing through $x$, then
\[
u_{x_{0}}(x)=-d(x_{0},x),\quad\text{for any }x\in X.
\]

\begin{example}
If $X=(M,g)$ is a complete smooth Riemannian manifold, a point $x_0\in X$ possessing the property described above is called a pole \cite[Page 151, Remark 3.4]{Carmo}.
\end{example}

In addition, if a pair of points $x_0, x_1\in X$ lies on a geodesic line, then it follows from Definition \ref{sa sol} and \eqref{pt-pm} that
\[
u_{x_0}(x_1)=u_{x_1}(x_0)=-d(x_0,x_1)\quad\text{and}\quad\rho(x_0,x_1)=d(x_0,x_1).
\]
From the above discussion, we obtain

\begin{prop}
If $\gamma:\mathbb{R}\rightarrow X$ is a geodesic line, then the inclusion map $i:(\text{Im}(\gamma), d)\rightarrow(X,\rho)$ is an isometry.
\end{prop}

\begin{example}
If $X$ is a Hadamard manifold, i.e., a simply connected, complete Riemannian manifold of non-positive curvature, then every complete geodesic is a geodesic line. Thus every pair of distinct points $x_0, x_1$ lies on a geodesic line and the pseudo-metric $\rho$ on $X$ coincides with the metric induced by $g$.
\end{example}

The following theorem deals with the first case when $\rho$ coincides with the original metric $d$ everywhere.
\begin{theorem}\label{pt-1}
For any $x_0,x_1\in X$, the pseudo-metric $\rho:X\times X\rightarrow\mathbb{R}$ satisfies
\begin{equation}\label{eq-pm}
\rho(x_0,x_1)=d(x_0,x_1),
\end{equation}
if and only if for every pair of distinct points $x_0, x_1$, there is a geodesic line passing through them.
\end{theorem}

\begin{proof}
Notice that for any $x_0\in X, u_{x_0}(x_0)=0$. Now Proposition \ref{hauss} states that, for any $x_0,x_1\in X$,
\[
|u_{x_0}(x_1)|=|u_{x_0}(x_1)-u_{x_0}(x_0)|\leq d(x_0,x_1),\quad |u_{x_1}(x_0)|\leq d(x_0,x_1).
\]
Combining the above inequalities and the assumption \eqref{eq-pm}, we have for any $x_0,x_1\in X$,
\begin{equation}\label{pt-3}
u_{x_0}(x_1)=u_{x_1}(x_0)=-d(x_0,x_1).
\end{equation}

For any pair of distinct points $x_0,x_1\in X$, we set $a=d(x_0,x_1)>0$. We choose a unit speed minimal geodesic segment $\xi:[0,a]\rightarrow X$ with $\xi(0)=x_0,\xi(a)=x_1$. By Proposition \ref{pro co-ray}, there is a unit speed co-ray $\eta:[0,+\infty)\rightarrow X$ of $u_{x_0}$ starting from $x_1$. We construct a unit speed curve $\gamma:[0,+\infty)\rightarrow X$ by
\begin{equation}\label{gl-1}
\gamma(t)=
\begin{cases}
\xi(t),\quad t\in[0,a],\\
\eta(t-a),\quad t\in[a,\infty).
\end{cases}
\end{equation}
\textit{Claim:} $\gamma$ is a geodesic ray.

\vspace{1em}
\textit{Proof of the claim:} By \eqref{pt-3} and definition \eqref{gl-1}, for $t\in[0,a]$,
\begin{equation}\label{pt-4}
u_{x_0}(\gamma(t))=u_{x_0}(\xi(t))=-d(x_0,\xi(t))=-d(\xi(0),\xi(t))=-t.
\end{equation}
For $t\geq a$, by definition \eqref{gl-1} and Proposition \ref{gradient},
\begin{align*}
&\,u_{x_0}(\gamma(t))=u_{x_0}(\eta(t-a))\\
=&\,u_{x_0}(\eta(0))-(t-a)\\
=&\,-d(x_0,\eta(0))-(t-a)\\
=&\,-d(x_0,x_1)-(t-a)=-t.
\end{align*}
Combining \eqref{pt-4} and above equalities, we have for any $0\leq t_1\leq t_2$,
\begin{equation}\label{pt-5}
u_{x_0}(\gamma(t_2))-u_{x_0}(\gamma(t_1))=t_1-t_2.
\end{equation}
Thus Remark \ref{ray} implies that $\gamma$ is a unit speed geodesic ray.

\vspace{1em}
Now for any $n\in\mathbb{N}$, set $x_n=\gamma(na)$. Again by Proposition \ref{pro co-ray}, there is a unit speed co-ray $\eta_n:[0,+\infty)\rightarrow X$ of $u_{x_n}$ starting from $x_0$. Now we construct a sequence of unit speed curve $\gamma_n:(-\infty,na]\rightarrow X$ by
\begin{equation*}
\gamma_n(t)=
\begin{cases}
\eta_n(-t),\quad t\in(-\infty,0],\\
\gamma(t),\quad t\in[0,na].
\end{cases}
\end{equation*}
Since $\gamma|_{[0,na]}$ is a unit speed minimal geodesic segment, with $x_0, x_1$ replaced by $x_n, x_0$ respectively, the above claim shows that $\gamma_n$ is a unit speed geodesic ray. Using the local compactness of $X$ and Ascoli-Arzel\`{a} theorem, up to a subsequence, $\gamma_n:(-\infty,na]\rightarrow X$ converges uniformly on compact intervals to a 1-Lipschitz curve $\bar{\gamma}:\mathbb{R}\rightarrow X$ with $\bar{\gamma}|_{[0,\infty)}=\gamma$. Since every $\gamma_n$ is a unit speed geodesic ray, by the lower semi-continuity of length functional and the definition of $\bar{\gamma}$, for any $t_1\leq t_2$,
\begin{align*}
&\,d(\bar{\gamma}(t_1),\bar{\gamma}(t_2))=\lim_{n\rightarrow\infty}d(\gamma_n(t_1),\gamma_n(t_2))\\
=&\,t_2-t_1=\liminf_{n\rightarrow\infty}L(\gamma_n|_{[t_1,t_2]})\geq L(\bar{\gamma}|_{[t_1,t_2]}).
\end{align*}
Thus $\bar{\gamma}$ is a geodesic line with
\[
\bar{\gamma}(0)=\gamma(0)=x_0,\quad\bar{\gamma}(a)=\gamma(a)=x_1,
\]
which completes the proof.
\end{proof}

\begin{remark}
If we only assume $\rho=d$ for some pair of distinct points $x_0,x_1\in X, a=d(x_0,x_1)>0$, then there are two unit speed geodesic rays $\gamma^{x_1}_{x_0},\gamma^{x_0}_{x_1}:[0,+\infty)\rightarrow X$ such that
\begin{equation}
\begin{split}
\gamma^{x_1}_{x_0}(0)=x_0,\quad \gamma^{x_1}_{x_0}(a)=x_1,\\
\gamma^{x_0}_{x_1}(0)=x_1,\quad \gamma^{x_0}_{x_1}(a)=x_0.
\end{split}
\end{equation}
Notice that $\gamma^{x_1}_{x_0},\gamma^{x_0}_{x_1}$ may not be subrays of a same geodesic line.
\end{remark}

\begin{corollary}\label{pt-nb}
Assume the pseudo-metric $\rho:X\times X\rightarrow\mathbb{R}$ coincides with the original metric $d$ and in addition, $X$ satisfies the non-branching property (NB), then
\begin{enumerate}
  \item for any pair of distinct points $x_0, x_1\in X$ and $a=d(x_0, x_1)>0$, the unit speed geodesic line $\gamma^{x_1}_{x_0}:\mathbb{R}\rightarrow X$ with $\gamma^{x_1}_{x_0}(0)=x_0, \gamma^{x_1}_{x_0}(a)=x_1$ is unique.
  \item every unit speed complete geodesic $\gamma:\mathbb{R}\rightarrow X$ is a geodesic line.
\end{enumerate}
\end{corollary}

\begin{proof}
The first conclusion is a direct consequence of (NB). We now turn to the second one: by definition of geodesics, there are $t_0<t_1<t_2$ such that $\gamma|_{[t_0,t_2]}$ is a minimal geodesic segment and $x_0=\gamma(t_0),x_1=\gamma(t_1)$ are distinct points. By (1), there is a unique unit speed geodesic line $\gamma^{x_1}_{x_0}:\mathbb{R}\rightarrow X$ with $\gamma^{x_1}_{x_0}(t_0)=x_0$ and $\gamma^{x_1}_{x_0}(t_1)=x_1$. Then (NB) implies that for $t\in[t_0,t_2], \gamma(t)=\gamma^{x_1}_{x_0}(t)$. Set
\begin{equation}\label{pt-6}
\begin{split}
T_+=\sup\,\,\{\bar{t}\in\mathbb{R}\,|\,\,\gamma(t)=\gamma^{x_1}_{x_0}(t)\text{ for any }t\in[t_0,\bar{t}]\},\\
T_-=\inf\,\,\{\underline{t}\in\mathbb{R}\,|\,\,\gamma(t)=\gamma^{x_1}_{x_0}(t)\text{ for any }t\in[\underline{t},t_2]\}.
\end{split}
\end{equation}
Notice that $T_-\leq t_0, T_+\geq t_1$. We prove that $T_+=+\infty$ by contradiction: assume $T_+<+\infty$, the continuity of $\gamma$ and $\gamma^{x_1}_{x_0}$ implies that $\gamma(T_+)=\gamma^{x_1}_{x_0}(T_+)$. Since $\gamma$ is a geodesic, there is $\delta>0$ such that $\gamma|_{[T_{+}-\delta,T_{+}+\delta]}$ is minimal. Thus the two unit speed minimal geodesic segments $\gamma|_{[T_{+}-\delta,T_{+}+\delta]}$ and $\gamma^{x_1}_{x_0}|_{[T_{+}-\delta,T_{+}+\delta]}$ satisfy $\gamma(T_{+}-\delta)=\gamma^{x_1}_{x_0}(T_{+}-\delta)$ and $\gamma(T_{+})=\gamma^{x_1}_{x_0}(T_{+})$. It follows from (NB) that $\gamma(t)=\gamma^{x_1}_{x_0}(t)$ for any $t\in[t_0,T_{+}+\delta]$, this contradicts \eqref{pt-6}. In a similar fashion, $T_-=-\infty$. So we conclude that $\gamma=\gamma^{x_1}_{x_0}$ is a geodesic line.
\end{proof}

\begin{remark}
If (NB) is not assumed for $X$, then the first conclusion of the above corollary fails, see Figure~\ref{NBeg} below. But we do not have an example for which the second one fails.
\begin{figure}[h]
\caption{The example without (NB) assumption}\label{NBeg}
%    \vspace{0.5cm}
  \small \centering
  \includegraphics[width=8cm,height=3cm]{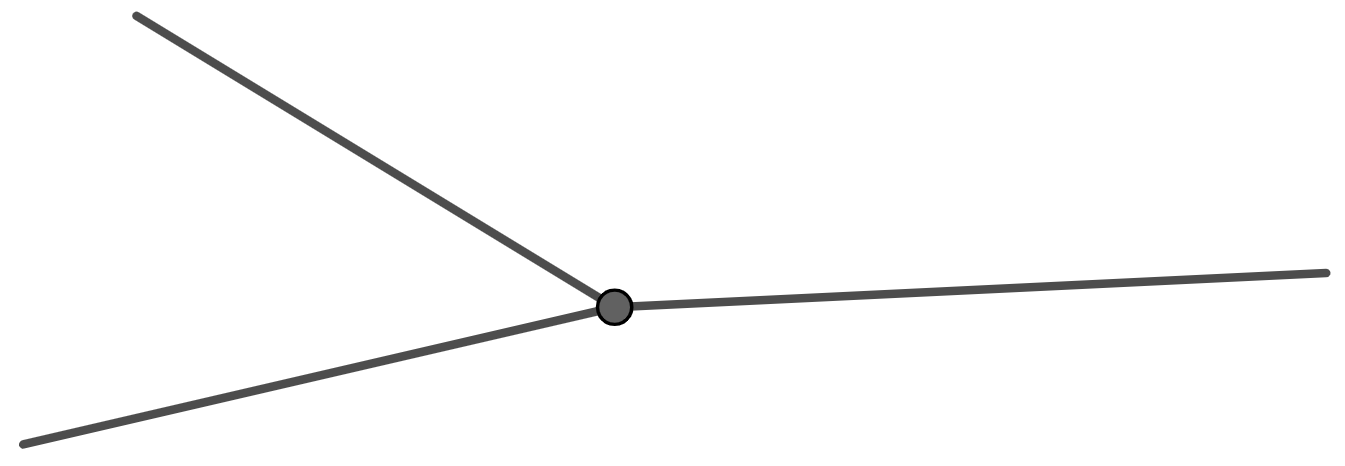}
\end{figure}
\end{remark}

\begin{remark}
In particular, if $X$ is an $n$-dimensional complete, connected Riemannian manifold, then for every $x\in X$, the exponential map $\exp_{x}:T_{x}X\rightarrow X$ is a diffeomorphism. Thus $X$ is diffeomorphic to $\mathbb{R}^n$.
\end{remark}

\vspace{0.3cm}
The second case relates the degeneracy of pseudo-metric $\rho$ to the structure of the set of dl-functions.
\begin{theorem}\label{pt-2}
Assume the pseudo-metric $\rho:X\times X\rightarrow\mathbb{R}$ vanishes everywhere, i.e., $\rho(x_0,x_1)=0$ for any $x_0,x_1\in X$, then, up to constants, there is only one dl-function.
\end{theorem}

The proof of Theorem \ref{pt-2} relies on a direct application of Theorem \ref{sa-Min}, which can be seen as another representation of dl-functions.
\begin{prop}\label{prop:rep2}
For any dl-function $u:X\rightarrow\mathbb{R}$ and $x\in X$,
\begin{equation}\label{rep2}
u(x)=\sup_{y\in X}\,\,[u(y)+u_{y}(x)].
\end{equation}
\end{prop}

\begin{proof}
For any $y\in X$, the function $v:X\rightarrow\mathbb{R}$ defined by
\[
v(x)=u(x)-u(y)
\]
is a dl-function (since $u$ is a dl-function) vanishing at $y$. By Theorem \ref{sa-Min},
\[
u_y(x)\leq v(x),
\]
which is equivalent to
\[
u(x)\geq u_y(x)+u(y) .
\]
Since the left hand side of the above inequality is independent of $y$, we take supremum for $y$ on both sides to obtain
\[
u(x)\geq\sup_{y\in X}\,\,[u(y)+u_{y}(x)].
\]
On the other hand, it is easily seen that the supremum on the right hand side is attained at $y=x$ with value $u(x)$.
\end{proof}

\begin{remark}
Assume $X$ is a smooth Riemannian manifold, then by Remark \ref{vis}, any dl-function is locally semiconcave. An interesting aspect of the above formula is that a semiconcave function can be obtained as the \textbf{supremum}, instead of infimum, of a family of semiconcave function. For example, we can compare Proposition~\ref{prop:rep2} with Proposition~\ref{representation formula} in Section~\ref{sec:representation formula}, we believe that there is a \textbf{beautiful duality} between Busemann function and set-assigned dl-functions we defined in this paper.
\end{remark}

\begin{proof}[Proof of Theorem \ref{pt-2}]
Fix $y_0\in X$. The assumption $\rho\equiv0$ and Theorem \ref{pm} indicate that
\[
u_y(x)=u_{y_0}(x)+c(y),\quad\text{ for all }x, y\in X.
\]
Then by \eqref{rep2} and above equality,
\begin{align*}
&\,u(x)=\sup_{y\in X}\,\,[u(y)+u_{y}(x)]\\
=&\,\sup_{y\in X}\,\,[u(y)+u_{y_0}(x)+c(y)]\\
=&\,u_{y_0}(x)+\sup_{y\in X}\,\,[u(y)+c(y)]\\
=&\,u_{y_0}(x)+\text{ const }.
\end{align*}
Thus, up to constants, there is only one dl-function on $X$, namely $u_{y_0}$.
\end{proof}

We want to explore the underlying geometry of $X$ when the pseudo-metric is degenerate. Unfortunately, we only obtain partial results (when $X$ is a smooth Riemannian manifold) on this topic. In the following, $\nabla^{+}u(x)$ denotes the set of super-gradients of $u$ at $x$ with respect to $g$ and we use the Riemannian volume as a natural Borel regular measure on $X$. The result can be formulated as

\begin{theorem}\label{uni-criterion}
Let $X$ be a connected, complete, non-compact, smooth \textbf{Riemannian manifold}. If $X$ admits only one dl-function, then for every point $x\in X$ except those in a set of measure zero, there is only one unit speed geodesic ray starting from $x$.
\end{theorem}

\begin{proof}
Due to Remarks~\ref{eikonal} and \ref{vis}, a dl-function $u:X\rightarrow\mathbb{R}$ is a viscosity solution to the eikonal equation \eqref{eik}. Proposition \ref{pro co-ray} states that for any point $x\in X$, there is at least one $u$-ray $\gamma:[0,+\infty)\rightarrow X$ starting from $x$. Such $\gamma$ is unique if and only if $u$ is differentiable at $x$. Observe that
\begin{itemize}
  \item any geodesic ray $\gamma:[0,+\infty)\rightarrow X$ starting from $x$ is a $u_{x}-$ray.
\end{itemize}
Since $u_{x}$ is locally semi-concave, there is a one to one correspondence between $\dot{\gamma}(0)$ and elements of $\nabla^{+}u_{x}(x)$.

We argue by contradiction. Assume there is a subset $S\subseteq X$ with positive measure (with respect to the Riemannian volume) such that for every $x\in S$, there exist at least two geodesic rays starting from $x$. We choose any $x_{0}\in S$ and $u_{x_{0}}$ is the point-assigned function associated to $x_0$. Let $Z_{0}$ denote the set of non-differentiable points of $u_{x_{0}}$. By Rademacher theorem, since $u_{x_{0}}$ is 1-Lipschitz, $Z_{0}$ has zero measure. Now we choose any $x_{1}$ in the \textbf{non-empty set} $S\setminus Z_{0}$. By the above choice, $u_{x_{1}}$ is differentiable at $x_{0}$. The discussion at the beginning of the proof implies that $\nabla^{+}u_{x_{1}}(x_1)$ contains at least two elements, therefore $u_{x_{1}}$ is non-differentiable at $x_{1}$. This shows that $u_{x_0}$ and $u_{x_1}$ cannot differ by a constant, which contradicts the condition that the dl-function is unique up to constants.
\end{proof}

Let us look at a simple but interesting example illustrating the situation described in Theorem \ref{uni-criterion}, see also \cite[Page 472]{C}.
\begin{example}
We consider the flat half cylinder $S^{1}\times[0,\infty)$ with the standard product metric $g=d\theta\cdot dr$, where $\theta$ and $r$ denote the coordinate on the unit circle $S^{1}$ and ray $[0,\infty)$ respectively. To make the half cylinder into a non-compact Riemannian manifold, we install a semi-sphere hat $\{(x,y,z)\in\mathbb{R}^{3}|\,\,x^{2}+y^{2}+z^{2}=1,z\geq0\}$ on it by identifying the equator to the boundary $S^{1}\times\{0\}$ of the cylinder, see Figure~\ref{infinitestick} below.

\begin{figure}[h]
   \caption{The infinite stick $X_{0}$}\label{infinitestick}
    \vspace{0.5cm}
  \small \centering
  \includegraphics[width=9cm,height=4cm]{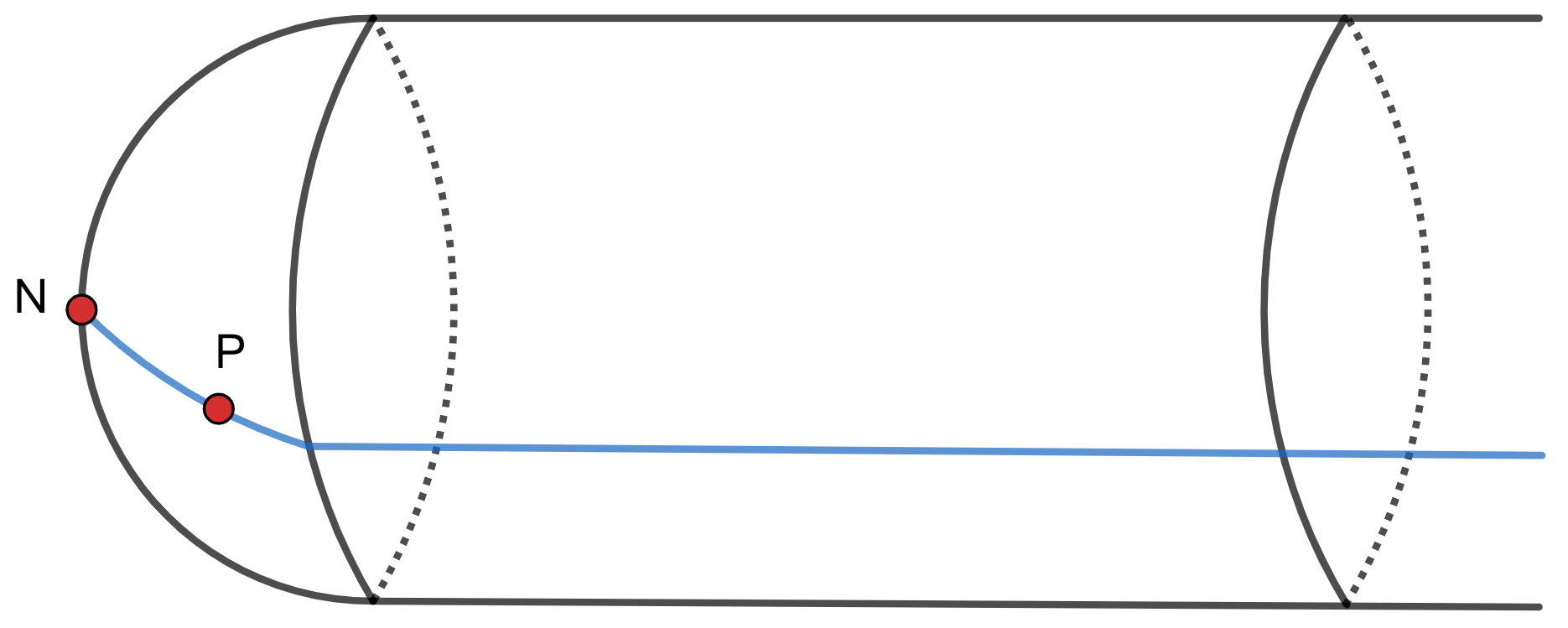}
\end{figure}

We call the resulting manifold an infinite stick and denote it by $X_{0}$. We remark that as a differentiable manifold, $X_{0}$ is of class $C^{1}$ but not $C^{2}$. But by some easy modification, we could make $X_{0}$ into a smooth Riemannian manifold with all the properties listed below unchanged.

\vspace{0.3cm}
Note that $X_{0}$ possesses the following properties:

(1) Except the vertex $N$, there is a unique geodesic ray starting from every point $p$ on $X_{0}$. That is the unique great circle on the semi-sphere connecting $N$ with the equator and passing through $p$ (if $p$ does not locate on the interior of the hat, this part is deleted) continued by a generator $\{\theta=\theta_{0}\}$ of the cylinder, where $(\theta_{0},0)$ is the intersection of the great circle with the equator.

(2) Thus, by Corollary \ref{uni-criterion}, the eikonal equation \eqref{eik} on $X_{0}$ admits only one solution up to constants. On the other hand, it is easy to check that $u(x)=u_{N}(x)=-d(x,N),x\in X_{0}$ is a global solution to \eqref{eik}. So we could conclude that for every $x_{0}\in X_{0}$,
$$
u_{x_{0}}=d(x_{0},N)-d(\cdot,N).
$$
\end{example}

The simplest non-Riemannian space with unique dl-function is the ray with one end-point, other examples can also be constructed.

Every point on these examples admits only one geodesic ray. It is a curious topic to extend the above result to compact, complete, locally compact geodesic spaces which admit, up to constants, only one dl-function. The main difficulty is to avoid the use of measure. In fact, we can ask that
\begin{problem}
Is it possible to find a sufficient and necessary \textbf{geometric condition}, like Theorem \ref{pt-1}, for the uniqueness (up to constants) of the dl-functions?
\end{problem}

\subsection{Level sets: compactness vs non-compactness}
Knowledge about horospheres, the level sets of Busemann function, is proved to be very important in understanding the global geometry of $X$. The same topic for set-assigned dl-functions deserves a discussion.

We consider Hadamard spaces, namely simply connected Riemannian manifold with non-positive curvature. In this situation, 
$$
u_{K}(\cdot)=-d(co K,\cdot),
$$
so the set-assigned dl-function $u_{K}(\cdot)$ attains its maximum and its level sets coincide with sphere $S_r(co K)$ which are compact. Roughly speaking, set-assigned functions measure the distance to all infinities (including any direction). One may ask whether they attain their maxima (or bounded from above) and their level sets are compact in general. However, the following example shows that this is \textbf{not the case}.

\vspace{1em}
To describe our construction, we start with $x$-axis of the Euclidean plane. For each $i\in\mathbb{N}$, we connect $(-i,0)$ with $(i,0)$ by three concatenate segments
\begin{equation}\label{eq-6.1}
\begin{split}
L^{1}_{i}:=\{\,\,(-i,t):t\in[0,i]\,\},\\
L^{2}_{i}:=\{\,\,(t,i):t\in[-i,i]\,\,\},\\
L^{3}_{i}:=\{(i,i-t):t\in[0,i]\}
\end{split}
\end{equation}
and obtain a graph $\mathbf{H}$ on the Euclidean plane (see the following picture):

\begin{figure}[h]
   \caption{The metric space $(H, \hat{d})$}
  \small \centering
  \includegraphics[width=9cm,height=5cm]{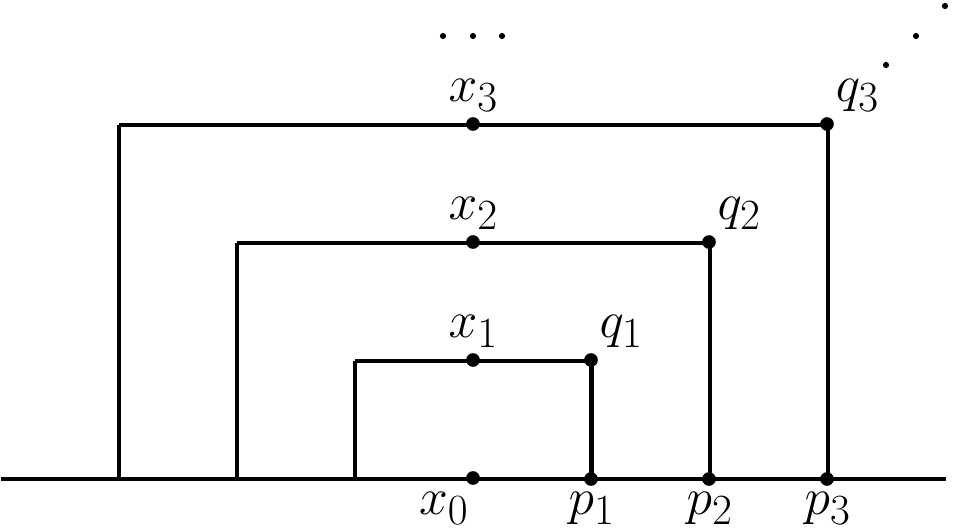}  \label{badexample}
\end{figure}

The Euclidean metric restricted to $\mathbf{H}$ induces a metric $\hat{d}$ \cite[Section 2.3, page 36]{BBI}, which makes $\mathbf{H}$ into a length space. %We choose the origin as our base-point $x_{0}$, the $r$-sphere centered at $x_{0}$ is defined by
%$$
%S\!_r(x_{0})=\{x\in\mathbf{H}:\hat{d}(x_{0},x)=r\}
%$$
%and for $r>0,x\in\mathbf{H}, u^{r}_{x_{0}}(x)$ and $u_{x_{0}}(x)$ are defined similar as for the case of Riemannian manifold. One can check that $u^{r}_{x_{0}}(x)$ is also monotone increasing with respect to $r$ and $u_{x_{0}}$ is well-defined.

For $k\in\mathbb{N}$, let $x_{k}=(0,k), p_{k}=(k,0)$, we shall evaluate $u_{x_{0}}$ at these points. Note that by the definition of $u_{x_{0}}$ and monotonicity of $u^{r}_{x_{0}}(x)$ on $r$, we can take $r$ to be an integer $n\gg k$ and approximate the value $u_{x_{0}}(x)$ by $u^{n}_{x_{0}}(x)$. For convenience, we assume $n\equiv0\,\,(mod\,6)$. In this case,
\begin{align}\label{ex-sp}
S_{n}(x_{0})=\,\,&\{(i,n-i):i\in\mathbb{N},\,\,\frac{n}{2}\leq i\leq n\}\\ \notag
\cup\,\,&\{(3i-n,i):i\in\mathbb{N},\,\,\frac{n}{3}\leq i\leq\frac{n}{2}\}.
\end{align}
From \eqref{ex-sp}, we deduce that for $k<\frac{n}{3}$,
\begin{equation*}
\begin{split}
\hat{d}(p_{k},S_{n}(x_{0}))&=n-k,\\
\hat{d}(x_{k},S_{n}(x_{0}))&=\hat{d}(x_{k},p_{k})+\hat{d}(p_{k},S_{n}(x_{0}))=2k+(n-k)=n+k.
\end{split}
\end{equation*}
Thus by definition of $u_{x_{0}}$, for any $k\in\mathbb{N}$,
\begin{equation}\label{eq-6.2}
\begin{split}
u_{x_{0}}(p_{k})&=\lim_{n\rightarrow\infty}u^{n}_{x_{0}}(p_{k})=-k,\\
u_{x_{0}}(x_{k})&=\lim_{n\rightarrow\infty}u^{n}_{x_{0}}(x_{k})=k.
\end{split}
\end{equation}
Similarly, it is easy to see that for $k\in\mathbb{N}, q_{k}=(k,k)$ are null points of $u_{x_{0}}$. Thus for the length space $(\mathbf{H},\hat{d})$, $u_{x_{0}}$ has no upper bound and its level set $u_{x_{0}}^{-1}(0)$ is not compact.

\end{document}